
\documentclass[11pt]{amsart}

\vfuzz2pt 

\newtheorem{thm}{Theorem}[section]
\newtheorem{cor}[thm]{Corollary}
\newtheorem{lem}[thm]{Lemma}
\newtheorem{prop}[thm]{Proposition}
\newtheorem{claim}[thm]{Claim}
\theoremstyle{definition}

\newtheorem{example}[thm]{Example}
\theoremstyle{remark}
\newtheorem{rem}[thm]{Remark}
\newtheorem{question}[thm]{Question}
\numberwithin{equation}{section}
\usepackage{amsmath,amssymb,graphicx,bbm,amsthm,hyperref,mathrsfs,tikz,palatino,changepage,enumerate,color,ifpdf,booktabs,multirow,tabularx,appendix,makecell}
\usepackage[all]{xy}
\usetikzlibrary{shapes.geometric,arrows}

\newcommand{\Arf}{{\rm Arf}}
\newcommand{\Aut}{{\rm Aut}}
\newcommand{\MCG}{{\rm MCG}}



\begin{document}

\title[]{Extendability over the $4$-sphere and invariant spin structures of surface automorphisms}

\author{Weibiao Wang}
\address{Morningside Center of Mathematics, Academy of Mathematics and Systems Science, Chinese Academy of Sciences, Beijing, 100190, China}
\email{wangweibiao@amss.ac.cn}

\author{Zhongzi Wang}
\address{School of Mathematical Sciences, Peking University, Beijing, 100871, China}
\email{wangzz22@stu.pku.edu.cn}

\subjclass[2020]{57N35, 57M60.}

\keywords{Extendability, automorphism, periodic surface map, spin structure.}

\begin{abstract} 
It is known that an automorphism of $F_g$, the oriented closed surface of genus $g$, 
is extendable over the 4-sphere $S^4$ if and only if it has a bounding invariant spin structure \cite{WsWz}. 

We show that each automorphism of $F_g$ has an invariant spin structure, and obtain  a stably extendable result: Each automorphism of $F_g$ is extendable over $S^4$ after a connected sum with the identity map on the torus. 
Then each automorphism of an oriented once punctured surface is extendable over $S^4$.

For each $g\neq 4$, we construct a periodic map  on $F_g$ that is not extendable over $S^4$, and we prove that every periodic map on $F_4$ is extendable over $S^4$,
which answer a question in \cite{WsWz}. 

We illustrate for an automorphism $f$ of $F_g$, how to find its invariant spin structures, bounding or not; and once $f$ has   a bounding invariant spin structure,
how to construct an embedding $F_g\hookrightarrow S^4$ so that $f$ is extendable with respect to this embedding.
\end{abstract}

\date{}
\maketitle
\tableofcontents

\section{Introduction}\label{sect:introduction}

We will work in the category of oriented smooth manifolds. 
For an oriented compact manifold $M$, we use $\Aut(M)$ to denote the group of its automorphisms, i.e., orientation-preserving self-diffeomorphisms. 
Let $F_g$ be the oriented closed surface of genus $g\geq 1$. 
And let $\MCG(F_g)$ be the mapping
class group of $F_g$, i.e., the group of
smooth isotopy classes of automorphisms of $F_g$. 
For $f\in\Aut(F_g)$, denote its mapping class by $[f]$.

An element $f$ in $\Aut(F_g)$ is \textit{extendable over $M$ with respect to an embedding $e:F_g\hookrightarrow M$} if there exists an element $\tilde f$ in $\Aut(M)$ such that $$\tilde f\circ e=e\circ f.$$
Call a surface map $f\in\Aut(F_g)$ is \textit{extendable over $M$}, if $f$ is extendable over $M$ with respect to some embedding $e$.
Call $[f]$ is extendable if $f$ is extendable.
For an automorphism of a general manifold $N$, we can similarly talk about its extendability over another manifold $M$.

It is known that each $f\in\Aut(F_g)$ is extendable over $S^5$, thus is extendable over any $n$-dimensional manifold with $n\geq 5$
\cite{Hir}.
On the other hand, the codimension-$1$ extendability is an active topic, see \cite{NWW} and the references therein. 
We will focus on the codimension-$2$ case.

An embedding $e:F_g\hookrightarrow S^4$ is said to be \textit{trivial} if $e(F_g)$ bounds an embedded handlebody.
For a trivial embedding of the torus $F_1$, Montesinos showed in 1983 that $f\in\Aut(F_1)$ is extendable over $S^4$ with respect to it if and only if $f$ preserves the induced Rohlin form \cite{Mon}.  
In 2002 the same criterion for $F_g$ was proved by Hirose in a comprehensive work \cite{Hi1}. 
In 2012, it was proved that if $N$ is an $n$-dimensional manifold and $f\in\Aut(N)$ is extendable  over $S^{n+2}$ with respect to $e: N\hookrightarrow S^{n+2}$, then $f$ preserves  the spin structure induced by $e$ \cite{DLWY}. 
In the case of $F_g\hookrightarrow S^4$, the induced spin structure and the induced Rohlin form coincide \cite[Lemma 3.2]{WsWz}.
Based on \cite{Hi1} and \cite{DLWY}, 
as well as the fact that $\Aut(F_g)$ acts transitively on the set of bounding spin structures on $F_g$ \cite[\S 5]{Jo} (also in \cite[Lemma 4.2]{DLWY}), it was proved in 2021 that 

\begin{thm}\label{WW3.1} \cite[Theorem 3.1]{WsWz}
An automorphism $f$ of $F_g$ is extendable over $S^4$ if and only if $f$ has an invariant bounding spin structure on $F_g$.
\end{thm}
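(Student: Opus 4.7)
The plan is to derive both implications from the three results assembled in the paragraph preceding the statement: Hirose's criterion for the trivial embedding \cite{Hi1}, the general obstruction theorem of \cite{DLWY}, and Johnson's transitivity \cite[\S 5]{Jo}. With these in hand, the whole argument is essentially formal.

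For necessity, I would suppose that $f$ is extendable over $S^4$ via some embedding $e:F_g\hookrightarrow S^4$. Since $H_2(S^4)=0$, the image $e(F_g)$ is null-homologous and hence bounds an embedded oriented $3$-manifold $V\subset S^4$; because $S^4$ is spin, $V$ inherits a spin structure whose restriction to $\partial V=e(F_g)$ agrees with the embedding-induced spin structure $\sigma_e$ on $F_g$ (identified with the induced Rohlin form by \cite[Lemma 3.2]{WsWz}). In particular $\sigma_e$ is bounding. The theorem of \cite{DLWY} applied to any extension $\tilde f\in\Aut(S^4)$ of $f$ then shows that $f$ preserves $\sigma_e$, providing an invariant bounding spin structure on $F_g$.

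For sufficiency, suppose $f$ preserves a bounding spin structure $\sigma$. I would fix a trivial embedding $e_0:F_g\hookrightarrow S^4$ whose image bounds a handlebody; its induced spin structure $\sigma_0$ is also bounding, so Johnson's transitivity yields $h\in\Aut(F_g)$ with $h_*\sigma_0=\sigma$. A direct calculation gives $(h^{-1}fh)_*\sigma_0=(h^{-1})_*f_*h_*\sigma_0=(h^{-1})_*f_*\sigma=(h^{-1})_*\sigma=\sigma_0$, so $h^{-1}fh$ preserves $\sigma_0$, equivalently the induced Rohlin form of $e_0$ by \cite[Lemma 3.2]{WsWz}. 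Hirose's criterion \cite{Hi1} then produces $\tilde g\in\Aut(S^4)$ with $\tilde g\circ e_0=e_0\circ(h^{-1}fh)$. Setting $e=e_0\circ h^{-1}$ and composing this relation on the right with $h^{-1}$ gives $\tilde g\circ e=e\circ f$, exhibiting $f$ as extendable over $S^4$ via $e$.

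I do not anticipate a genuine technical obstacle, since both directions reduce formally to the three cited theorems; the only substantive idea is the conjugation trick in the sufficiency direction, which transports Hirose's trivial-embedding criterion to an arbitrary bounding spin structure by moving the embedding rather than the spin structure. The one bookkeeping point to verify carefully is the correct variance of the $\Aut(F_g)$-action on spin structures, so that the conjugation computation and the final identity $\tilde g\circ e=e\circ f$ come out with consistent orders of composition.
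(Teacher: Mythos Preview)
Your proposal is correct and follows exactly the approach the paper itself indicates: the theorem is quoted from \cite{WsWz} without a detailed proof here, but the preceding paragraph explicitly says it was obtained by combining Hirose's criterion \cite{Hi1}, the obstruction result of \cite{DLWY}, and Johnson's transitivity on bounding spin structures \cite{Jo}, which is precisely your assembly. The conjugation/change-of-embedding step you spell out is the standard way these pieces fit together, and your caveat about variance is the only thing to watch.
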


Theorem \ref{WW3.1} transfers a problem in geometric flavor to a problem in more algebraic flavor (for details see Theorem \ref{thm:extendability}), in particular, when $f$ is given by a product of Dehn twists along standard generators of $H_1(F_g; \mathbb{Z}_2)$
(for details see Section \ref{sect:low-genus}). 

It is natural to ask whether every $f\in\Aut(F_g)$ has an invariant spin structure. 
The answer is YES. 

\begin{thm}\label{IS}
Every automorphism of $F_g$ has an invariant  spin structure.
\end{thm}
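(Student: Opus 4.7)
The plan is to translate the problem into linear algebra via the classical correspondence between spin structures on $F_g$ and quadratic refinements of the mod-$2$ intersection form, and then to exhibit an invariant refinement by direct computation.

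First, I would recall the setup. Spin structures on $F_g$ are in natural bijection with the set $\mathcal{Q}$ of quadratic forms $q\colon H_1(F_g;\mathbb{Z}_2)\to\mathbb{Z}_2$ satisfying $q(u+v)=q(u)+q(v)+u\cdot v$, where $u\cdot v$ denotes the mod-$2$ intersection pairing. The difference of two such forms is linear, so $\mathcal{Q}$ is an affine space over $H^1(F_g;\mathbb{Z}_2)$; fixing a base point $q_0\in\mathcal{Q}$, every $q\in\mathcal{Q}$ has the form $q_0+\alpha$ for a unique $\alpha\in H^1(F_g;\mathbb{Z}_2)$. The action of $f\in\Aut(F_g)$ on $\mathcal{Q}$ factors through the induced symplectic automorphism $A:=f_*$ of $H_1(F_g;\mathbb{Z}_2)$ by $q\mapsto q\circ A^{-1}$, so $q$ is $f$-invariant iff $q\circ A=q$.

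Writing $q=q_0+\alpha$ and expanding, the invariance condition $q\circ A=q$ becomes the linear equation
\[
(\mathrm{id}-A^{*})\alpha=\beta\qquad\text{in }H^1(F_g;\mathbb{Z}_2),
\]
where $\beta(v):=q_0(Av)-q_0(v)$. The linearity of $\beta$ follows from a short computation using that $A$ preserves the intersection form; this is the only routine checkpoint in the argument.

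Finally, I would show that this equation is always solvable. By $\mathbb{Z}_2$-duality,
\[
\mathrm{Im}(\mathrm{id}-A^{*})=\mathrm{Ann}\bigl(\mathrm{Ker}(\mathrm{id}-A)\bigr),
\]
so it suffices to verify that $\beta$ vanishes on the $A$-fixed subspace of $H_1(F_g;\mathbb{Z}_2)$. But this is immediate: if $Av=v$, then $\beta(v)=q_0(v)-q_0(v)=0$. Hence a solution $\alpha$ exists and $q_0+\alpha\in\mathcal{Q}$ is an invariant spin structure for $f$. I do not anticipate a genuine obstacle here: once the affine structure on $\mathcal{Q}$ is in place, the content of \textbf{Theorem \ref{IS}} reduces to the observation that the obstruction class $[\beta]\in H^1(F_g;\mathbb{Z}_2)/\mathrm{Im}(\mathrm{id}-A^{*})$ to an invariant refinement vanishes by construction.
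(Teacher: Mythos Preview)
Your proposal is correct and follows essentially the same approach as the paper: fix a basepoint $q_0$, reduce invariance to a linear equation for $\alpha=q-q_0$, and observe that the obstruction $\beta=q_0\circ A-q_0$ vanishes on $\mathrm{Ker}(\mathrm{id}-A)$. The only cosmetic difference is that where you invoke the duality identity $\mathrm{Im}(\mathrm{id}-A^{*})=\mathrm{Ann}(\mathrm{Ker}(\mathrm{id}-A))$, the paper phrases the same step as a factoring lemma (a linear functional $\psi$ factors through $\phi$ iff $\ker\phi\subseteq\ker\psi$); these are equivalent formulations of the same elementary fact.
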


This was first proved by Atiyah \cite[Proposition 5.2]{Ati}.
We will reprove it using quadratic forms, indeed get a stronger result: The number of $f$-invariant spin structures equals $2^d$, where
$d$ is the dimension of the kernel of $f_*-id: H_1(F_g;\mathbb{Z}_2)\to H_1(F_g;\mathbb{Z}_2)$ (Theorem  \ref{SIS}). Moreover, to check whether there is a bounding one among those $2^d$ elements, one needs only to check $\frac{d^2+d+2}2$ elements among them (Proposition \ref{prop:Arf}). 
	
Based on Theorem  \ref{WW3.1} and Theorem \ref{IS}, we have the following stably extendable result.
(The connected sum of two mapping classes will be defined in Section \ref{sect:applications}.)

\begin{thm}\label{stable}
	Let $I_T$ be the identity on the torus $T$. 
	For any $[f]\in  \MCG(F_g)$, the connected sum $[f]\# [I_T]\in \MCG(F_{g+1})$ is extendable over $S^4$.
\end{thm}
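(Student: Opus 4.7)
The plan is to combine Theorem \ref{IS} with the fact that the torus carries spin structures realizing both possible values of the Arf invariant, and then to apply Theorem \ref{WW3.1}.

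First, by Theorem \ref{IS}, pick any $f$-invariant spin structure $\sigma$ on $F_g$, and let $\epsilon=\Arf(\sigma)\in\{0,1\}$. Next, I would analyze spin structures on $T$. Identifying spin structures with quadratic refinements $q$ of the intersection form on $H_1(T;\mathbb{Z}_2)$ and using a symplectic basis $\{a,b\}$, there are four spin structures on $T$, determined by the pair $(q(a),q(b))\in\{0,1\}^2$. Their Arf invariants are $q(a)q(b)$, so three are bounding and one, namely $q(a)=q(b)=1$, is non-bounding. Since $I_T$ acts trivially on $H_1(T;\mathbb{Z}_2)$, it fixes every spin structure on $T$. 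In particular, I can always find an $I_T$-invariant spin structure $\tau$ on $T$ with $\Arf(\tau)=\epsilon$.

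The key observation is that, under the connected sum decomposition $F_{g+1}=F_g\# T$, one has the orthogonal splitting $H_1(F_{g+1};\mathbb{Z}_2)=H_1(F_g;\mathbb{Z}_2)\oplus H_1(T;\mathbb{Z}_2)$ with respect to the intersection form, and quadratic refinements on the two summands combine (by direct sum) to a quadratic refinement on $F_{g+1}$. A count of orders ($2^{2g}\cdot 2^2=2^{2g+2}$) shows this accounts for every spin structure on $F_{g+1}$, and Arf invariants are additive under this construction. Applying this to the pair $(\sigma,\tau)$, the resulting spin structure $\sigma\# \tau$ on $F_{g+1}$ has
\[
\Arf(\sigma\#\tau)=\Arf(\sigma)+\Arf(\tau)=\epsilon+\epsilon=0,
\]
so $\sigma\# \tau$ is bounding. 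Moreover, since $\sigma$ is $f$-invariant and $\tau$ is $I_T$-invariant, the spin structure $\sigma\#\tau$ is $(f\# I_T)$-invariant. By Theorem \ref{WW3.1}, $f\# I_T$ is extendable over $S^4$, which gives the desired conclusion for $[f]\#[I_T]$.

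The main obstacle is the compatibility step: one has to check that the connected sum operation on mapping classes (as to be defined in Section \ref{sect:applications}) interacts correctly with the connected sum of spin structures, and that Arf invariants add as claimed. Once this naturality is established, everything else reduces to choosing the Arf invariant of $\tau$ to cancel that of $\sigma$, which is always possible precisely because both values $0$ and $1$ occur among spin structures on the torus.
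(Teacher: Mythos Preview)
Your proof is correct and follows essentially the same approach as the paper: the paper packages the connected-sum compatibility you describe into Proposition~\ref{prop:connectedsum}(1), then notes that $I_T$ has both bounding and unbounding invariant spin structures on $T$ and applies that proposition. Your argument simply unpacks this proposition in the special case $k=2$, and your ``main obstacle'' is precisely what the proof of Proposition~\ref{prop:connectedsum} verifies.
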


Let $F_{g,k}$ be the compact oriented surface of genus $g$ and with $k$ boundary components. 

\begin{thm}\label{puncture}
 Every automorphism of $F_{g,k}$ is extendable over $S^4$ when $k$ is odd.
\end{thm}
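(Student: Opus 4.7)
The plan is to reduce Theorem \ref{puncture} to the closed-surface case handled by Theorem \ref{stable}, by capping off the boundary components of $F_{g,k}$ in an $f$-equivariant manner. Let $f\in\Aut(F_{g,k})$, let $\sigma$ be the permutation it induces on the $k$ boundary components, and decompose the boundary into $\sigma$-orbits $\mathcal{O}_1,\dots,\mathcal{O}_r$ of sizes $m_1,\dots,m_r$, with $\sum m_j=k$.

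For each orbit $\mathcal{O}_j$ I would attach a cap $C_j\cong F_{0,m_j}$, realized concretely as a round sphere with $m_j$ symmetrically placed open disks removed, together with an orientation-preserving rotation $\rho_j$ of $S^2$ that cyclically permutes the $m_j$ holes as $\sigma$ permutes $\mathcal{O}_j$: for $m_j\ge 3$ or $m_j=1$ take a rotation about the north--south axis, and for $m_j=2$ take a $\pi$-rotation about an equatorial axis that swaps the two holes. The key feature is that each $\rho_j$ has interior fixed points in $C_j$ (two poles when $m_j\ge 3$, two antipodal equatorial points when $m_j=2$, and everything when $m_j=1$). After a preliminary isotopy of $f$ along $\partial F_{g,k}$ that harmonizes the boundary action of $f$ with that of the $\rho_j$, gluing the caps on gives a closed oriented surface $F_{g'}$ of genus $g'=g+k-r$ and an automorphism $\bar f\in\Aut(F_{g'})$ that restricts to $f$ on $F_{g,k}$ and whose fixed-point set meets every cap.

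Now apply Theorem \ref{stable} to $\bar f$: there exist an embedding $e:F_{g'+1}\hookrightarrow S^4$ and $\tilde f\in\Aut(S^4)$ with $\tilde f\circ e=e\circ(\bar f\# I_T)$. I would arrange the stabilizing connected sum to be performed at a small disk $D$ centered at an interior fixed point of $\bar f$ inside one of the caps $C_j$, isotoping $\bar f$ within $C_j$ so that it restricts to the identity on $D$. Because this isotopy is supported entirely inside $C_j$, it does not alter $f$ on $F_{g,k}$. Since $D\cap F_{g,k}=\emptyset$, we have $F_{g,k}\subset F_{g'}\setminus D\subset F_{g'+1}$ and $(\bar f\# I_T)|_{F_{g,k}}=\bar f|_{F_{g,k}}=f$. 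Restricting the extension relation to $F_{g,k}$ yields $\tilde f\circ(e|_{F_{g,k}})=(e|_{F_{g,k}})\circ f$, so $f$ is extendable over $S^4$ via the embedding $e|_{F_{g,k}}$.

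The main obstacle I expect is the careful construction of the equivariant cap $\bar f$: one has to realize each rotational extension $\rho_j$ as an orientation-preserving representative matching the prescribed action of $f$ on the boundary circles in $\mathcal{O}_j$, and one must verify that all the boundary isotopies can be made globally compatible across the different orbits. This is where the hypothesis that $k$ is odd is expected to enter, presumably to rule out global compatibility or orientation obstructions that could arise when combining several orbits of size two with the remaining structure. Once $\bar f$ has been produced, however, the rest is a direct application of Theorem \ref{stable} combined with the observation that the stabilizing torus can be attached well inside a cap, away from the original surface $F_{g,k}$.
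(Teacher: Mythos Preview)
Your outline is correct and in fact yields more than the stated theorem, but it is not the paper's argument. The paper attaches a copy of $F_{1,1}$ to every boundary circle (equivariantly along each $f$-orbit), producing $\hat f\in\Aut(F_{g+k})$, and then appeals directly to Theorem~\ref{thm:extendability}: when some orbit has odd length $k_1$ (guaranteed since $k$ is odd), the Arf contribution $k_1\cdot\Arf(q_{1,1})$ from that orbit of attached tori can be set to either value, and this freedom is used to assemble an $\hat f$-invariant quadratic form of Arf invariant $0$. You instead cap each boundary orbit with a rotated planar piece $F_{0,m_j}$---which always carries interior fixed points---and then invoke Theorem~\ref{stable} as a black box, performing the stabilizing torus connected sum at such a fixed point inside a cap. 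The equivariant capping you worry about has no parity obstruction: after the standard collar isotopy making $f^{m_j}$ the identity on each boundary orbit, the gluing to $(C_j,\rho_j)$ goes through for every $m_j\ge 1$, and the local ``untwisting'' of $\rho_j$ near a fixed point to create the disk $D$ is always available. Consequently your argument never uses that $k$ is odd and actually establishes extendability for every $k\ge 1$; your expectation that ``this is where the hypothesis that $k$ is odd is expected to enter'' is misplaced---in your approach it simply does not enter. The one point to make explicit is that the preliminary boundary isotopy replaces $f$ by an isotopic map, so your restriction argument directly extends that isotoped map, and you then conclude for the original $f$ by isotopy-invariance of extendability over $S^4$.
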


It is an application of Theorem \ref{WW3.1} that for $g\geq 1$, a Wiman map on $F_g$ (i.e., a periodic map of the maximum order $4g+2$) is extendable over $S^4$ if and only if  $g\equiv 0\text{ or }3\,({\rm mod}\,4)$ \cite[Theorem 1.2 (1)]{WsWz}. 
Note that for each $g\ge 1$, there exists a mapping class on $F_g$ that is not extendable over $S^4$ \cite[Corollary 1.4]{DLWY}, but the explicit map was unknown before for $g>1$. 
The following question is raised in \cite[Remark 1 (4)]{WsWz}.

\begin{question}
	For each $g\geq 1$, is there a periodic automorphism of $F_g$ that is not extendable over $S^4$?
\end{question}

We give a complete answer:

\begin{thm}\label{non-extendable}
	Suppose that $g\geq 1$ and $g\neq 4$, then there exists a periodic map  on $F_g$ that is not extendable over $S^4$.
	Indeed the map can be of period $8$ except when $g=1$.
	In contrast, every periodic map on $F_4$ is extendable over $S^4$.
\end{thm}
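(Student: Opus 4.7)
The plan is to apply Theorem~\ref{WW3.1} throughout: a periodic automorphism is extendable over $S^4$ if and only if it leaves some bounding spin structure invariant. Thus the non-extendability statements amount to \emph{every} invariant spin structure having Arf invariant $1$, while the $F_4$ statement amounts to \emph{some} invariant spin structure having Arf invariant $0$. In both directions the main tools are Theorem~\ref{SIS}, which identifies invariant spin structures with a $2^d$-set (where $d=\dim\ker(f_*-\mathrm{id})$ on $H_1(F_g;\mathbb{Z}_2)$), and Proposition~\ref{prop:Arf}, which reduces the Arf check to $\tfrac{d^2+d+2}{2}$ explicit representatives.

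For $g=1$, the Wiman map has order $6$, and \cite[Theorem~1.2(1)]{WsWz} gives directly that it is not extendable since $1\not\equiv 0,3\pmod 4$; this explains the drop from period $8$ to period $6$ in this single case. For $g\geq 2$ with $g\neq 4$, I would construct an order-$8$ periodic automorphism $\varphi_g\in\Aut(F_g)$ as a cyclic branched cover of $S^2$, with branching numbers $(a,b,c)$ of isotropy orders $2,4,8$ satisfying $4a+6b+7c=2g+14$ (Riemann--Hurwitz for period $8$) and the Nielsen rotation-number compatibility $\pmod 8$. Non-negative integer solutions exist for every $g\geq 2$, and the branching data can be selected so that every one of the $\tfrac{d^2+d+2}{2}$ Arf test values produced by Proposition~\ref{prop:Arf} equals $1$. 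For $g=4$, I would instead enumerate all cyclic actions on $F_4$ via Riemann--Hurwitz (using the Wiman bound $4g+2=18$ and $2g-2=6$, which yields a short finite list of branching signatures), and check for each that the same machinery produces at least one test value equal to $0$.

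The main obstacle is the $g=4$ case: Theorem~\ref{IS} alone gives only an invariant spin structure, not a bounding one, and one must extract the bounding property uniformly across the entire $F_4$ table of cyclic actions. I would aim to replace a brute-force enumeration by a formula expressing the Arf invariant of an invariant quadratic refinement directly in terms of the branching signature, coupled with a numerical coincidence peculiar to $g=4$ that forces the relevant combination to vanish. A secondary obstacle for $g\geq 2$, $g\neq 4$ is twofold: first, ensuring that the chosen $(a,b,c)$ defines an honest cyclic action via the Nielsen compatibility; second, arranging that \emph{every} invariant spin structure has nonzero Arf invariant, which is delicate but can be engineered using the freedom in selecting the branching data, particularly the rotation numbers at the order-$8$ fixed points.
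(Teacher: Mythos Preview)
Your overall framework is right---reduce everything to Arf invariants of invariant quadratic forms via Theorem~\ref{WW3.1}---but the execution diverges substantially from the paper and leaves real gaps.

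For $g\geq 2$, $g\neq 4$, the paper does \emph{not} build a separate branched cover for each $g$ and then verify the Arf condition case by case. Instead it introduces a \emph{periodic connected sum}: given two period-$n$ maps $f_i\in\Aut(F_{g_i})$ each with an isolated fixed point, one can equivariantly glue at those fixed points to obtain a period-$n$ map on $F_{g_1+g_2}$. Under this operation the Arf invariants of invariant quadratic forms simply add (Proposition~\ref{prop:periodicconnectedsum}). The whole construction then rests on three explicit period-$8$ building blocks---$f_2$ on $F_2$, and $f_3,f_3'$ on $F_3$, each given by a $\pi/4$ rotation of a polygonal model---for which one computes once and for all that $f_2,f_3$ have only unbounding invariant spin structures and $f_3'$ has only bounding ones (Lemmas~\ref{lem:3337}, \ref{lem:periodic}). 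Every $g\geq 2$, $g\neq 4$ is then written as a sum of $2$'s and $3$'s with an odd count of $f_2,f_3$ pieces, and non-extendability follows immediately. Your direct branched-cover plan could in principle work, but the assertion that ``the branching data can be selected so that every Arf test value equals $1$'' is exactly the hard part, and you give no mechanism for proving it uniformly in $g$; the connected-sum trick is the missing idea that turns an infinite family of verifications into three.

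For $g=4$, the paper does not look for a closed Arf formula in terms of branching data, nor does it enumerate cyclic actions via Riemann--Hurwitz. It quotes Hirose's result \cite{Hi2} that every torsion in $\MCG(F_4)$ is conjugate to a power of one of twelve explicit products of Dehn twists $f_{4,1},\dots,f_{4,12}$, computes each $f_{4,i*}$ on $H_1(F_4;\mathbb{Z}_2)$, and exhibits by hand an invariant quadratic form with Arf invariant $0$ in each case (Proposition~\ref{prop:torsion4}). Your hoped-for ``numerical coincidence peculiar to $g=4$'' does not materialize; the argument is a finite but genuinely case-by-case check, made feasible only because Hirose already reduced the problem to twelve generators. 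Enumerating all cyclic signatures via Riemann--Hurwitz would give a longer list (one for every divisor of every possible order up to $18$), so your route is strictly more work.
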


This paper is organized as follows.
In Section \ref{sect:quadratic} we introduce the basic tools, spin structures and quadratic forms on surfaces.
In Section \ref{sect:IS} we prove Theorem \ref{IS} and get more information about the invariant spin structures.
In Section \ref{sect:applications} we introduce a method to constructing surface maps on connected sums, then prove Theorem \ref{stable} and Theorem \ref{puncture}. 
We also provide explicit examples, from which the first half of Theorem \ref{non-extendable} follows.
The detailed calculations needed in the proof of Theorem \ref{non-extendable} are left to Section \ref{sect:low-genus},
which also provides a general way to find invariant spin structures for an automorphism $f$ of $F_g$, bounding or not, especially when $f$ is given by a product of Dehn twists along standard generators of $H_1(F_g; \mathbb{Z}_2)$.
In Section \ref{sect:embedding}, we illustrate once $f$ has a  bounding invariant spin structure
$\sigma$ (i.e., $f$ is extendable over $S^4$),
how to construct an embedding $e: F_g\hookrightarrow S^4$ so that the induced spin structure is $\sigma$
(i.e., $f$ is extendable with respect to  $e$).

\begin{rem} 
	Let $\mathcal{T}_g$ be the finite index subgroup of  $\MCG(F_g)$ which acts trivially on $H_1(F_g;\mathbb{Z}_2)$. 
	Then for any $f\in \Aut(F_g)$ and $[h]\in \mathcal{T}_g$, $h \circ f$ is extendable over $S^4$ if and only if $f$ is extendable over $S^4$. 
	Each $[h]\in \mathcal{T}_g$ is extendable over $S^4$ with respect to any trivial embedding $F_g\hookrightarrow S^4$.
\end{rem}

\section{Spin structures and quadratic forms on surfaces}\label{sect:quadratic}

Spin structure on surfaces is a subject which has many applications as well as  has many faces.
One may refer to \cite[Chapter IV and Appendix]{Ki} for an introduction.

Take the first homology group $H_1(F_g;\mathbb{Z}_2)$ of $F_g$ with coefficients in $\mathbb{Z}_2$. 
For $x,y\in H_1(F_g;\mathbb{Z}_2)$, denote by $x\cdot y$ their mod-2 intersection number.
A \textit{quadratic form} on $H_1(F_g;\mathbb{Z}_2)$ is a function $q:H_1(F_g;\mathbb{Z}_2)\to\mathbb{Z}_2$ satisfying
\begin{displaymath}
	q(x+y)=q(x)+q(y)+x\cdot y,\, \forall\,x,y\in H_1(F_g;\mathbb{Z}_2).
\end{displaymath}
For convenience, we use $\mathcal{Q}(F_g)$ to denote the set of quadratic forms on $H_1(F_g;\mathbb{Z}_2)$. 
An element $q\in\mathcal{Q}(F_g)$ is determined by its values on a basis of $H_1(F_g;\mathbb{Z}_2)$, so $\mathcal{Q}(F_g)$ is isomorphic to $H^1(F_g;\mathbb{Z}_2)$ as a set.

\input{symplectic.TpX}

Suppose that $\{a_1,b_1,\cdots,a_g,b_g\}$ is a \textit{symplectic basis} of $H_1(F_g;\mathbb{Z}_2)$ with respect to the mod-$2$ intersection form, i.e., $a_i\cdot b_j=1$ if and only if $i=j$, and $a_i\cdot a_j=b_i\cdot b_j=0$ for any $i,j$. 
See Fig. \ref{fig:symplectic} for a typical example.
The \textit{Arf invariant} of a quadratic form $q$ on $H_1(F_g;\mathbb{Z}_2)$ is defined by
\begin{displaymath}
	\Arf(q)=\sum_{i=1}^{g}q(a_i)q(b_i)\in\mathbb{Z}_2,
\end{displaymath}
which is independent of the choice of the symplectic basis.

Spin structures on $F_g$ are in 1-1 correspondance with quadratic forms on $H_1(F_g;\mathbb{Z}_2)$, and bounding spin structures correspond exactly to 
those quadratic forms with Arf invariant $0$ (cf. \cite[p. 36]{Ki}).

For $f\in\Aut(F_g)$ and $q\in\mathcal{Q}(F_g)$, naturally we have the pull-back quadratic form $f^*q\in\mathcal{Q}(F_g)$: 
\begin{displaymath}
	f^*q(x)=q(f_*(x)),\,\forall\,x\in H_1(F_g;\mathbb{Z}_2).
\end{displaymath}

Now we restate  Theorem \ref{WW3.1}.

\begin{thm}\cite[Theorem 3.1]{WsWz}\label{thm:extendability}
	For any $f\in\Aut(F_g)$, $f$ is extendable over $S^4$ if and only if there exists a quadratic form $q$ on $H_1(F_g;\mathbb{Z}_2)$ such that $\Arf(q)=0$ and $f^*q=q$.
\end{thm}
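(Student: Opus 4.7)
The statement is essentially a translation of Theorem \ref{WW3.1} through the dictionary between spin structures and quadratic forms, so the plan is to verify that the dictionary is compatible with the group action and the bounding condition, and then invoke Theorem \ref{WW3.1} directly.

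First, I would recall the bijection $\Phi:\{\text{spin structures on }F_g\}\to\mathcal{Q}(F_g)$ due to Johnson (as summarized in Kirby \cite[Chapter IV]{Ki}): a spin structure $\sigma$ is sent to the quadratic form $q_\sigma$ whose value on a simple closed curve $\gamma$ records whether the restriction of $\sigma$ to $\gamma$ is the Lie-group spin structure on $S^1$ or the bounding one. Under $\Phi$, the bounding spin structures are exactly those $q$ with $\Arf(q)=0$ (quoted in the excerpt right after the definition of $\Arf$).

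Next I would check naturality: for any $f\in\Aut(F_g)$, the induced action on spin structures corresponds under $\Phi$ to the pull-back action on quadratic forms, i.e.\ $q_{f^*\sigma}=f^*q_\sigma$. This is immediate from the definition of $q_\sigma$, since $f$ is a diffeomorphism and therefore sends simple closed curves to simple closed curves, and the characterization of $q_\sigma(\gamma)$ is invariant under diffeomorphism. Consequently, $f^*\sigma=\sigma$ if and only if $f^*q_\sigma=q_\sigma$; equivalently, $f$ has an invariant spin structure iff there exists $q\in\mathcal{Q}(F_g)$ with $f^*q=q$.

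Putting the pieces together, $f$ has an invariant \emph{bounding} spin structure if and only if there exists $q\in\mathcal{Q}(F_g)$ with $f^*q=q$ and $\Arf(q)=0$. Combined with Theorem \ref{WW3.1}, which characterizes extendability over $S^4$ by the existence of an invariant bounding spin structure, this yields the desired equivalence.

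Since Johnson's bijection and the Arf-invariant criterion for bounding are classical and are explicitly quoted in the excerpt, no step here is a genuine obstacle; the only thing to be careful about is the naturality check, i.e.\ confirming that pulling back a spin structure by $f$ corresponds to precomposing the quadratic form with $f_*$ on $H_1(F_g;\mathbb{Z}_2)$. The cleanest way to make this rigorous is via Johnson's description of $q_\sigma$ on embedded loops together with the functoriality of $f^*\sigma$, after which the theorem is an immediate corollary of Theorem \ref{WW3.1}.
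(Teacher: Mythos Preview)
Your proposal is correct and matches the paper's approach exactly: the paper does not give a separate proof of this theorem but simply introduces it with ``Now we restate Theorem~\ref{WW3.1}'', relying on the spin-structure/quadratic-form correspondence (and the $\Arf=0$ criterion for bounding) spelled out in the preceding paragraph. Your naturality check is the only point the paper leaves implicit, and you handle it correctly.
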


According to the above theorem, the extendability over $S^4$ of $f\in\Aut(F_g)$ is totally determined by the induced isomorphism $f_*$ on $H_1(F_g;\mathbb{Z}_2)$.
Indeed once $f_*$ is given, there is an algorithm to determine whether $f$ is extendable over $S^4$, see Section \ref{sect:low-genus}.

\section{Every surface automorphism has an invariant spin structure}\label{sect:IS}

We will prove the following  stronger version of Theorem \ref{IS}.

\begin{thm}\label{SIS}
	For every $f\in \Aut(F_g)$ with $f_*: H_1(F_g;\mathbb{Z}_2)\rightarrow H_1(F_g;\mathbb{Z}_2)$, 
	
	{\rm (1)} there exists a quadratic form $q$ on $H_1(F_g;\mathbb{Z}_2)$ such that $f^*q=q$;

	{\rm (2)} the number of $f$-invariant spin structures equals 
	$$|ker(f_*-id)|=2^{{\rm dim}ker(f_*-id)}.$$
\end{thm}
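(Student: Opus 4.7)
The plan is to regard $\mathcal{Q}(F_g)$ as an affine space over the dual space $V^*:=\mathrm{Hom}(H_1(F_g;\mathbb{Z}_2),\mathbb{Z}_2)$, with $f^*$ acting as an affine automorphism. Indeed, for any $q,q'\in\mathcal{Q}(F_g)$ the quadratic correction $x\cdot y$ cancels in $q'-q$, so the difference is a linear functional, making $\mathcal{Q}(F_g)$ a $V^*$-torsor of size $2^{2g}$. Writing $q'=q+\psi$ one checks immediately that $f^*q'=f^*q+\psi\circ f_*$, so the linear part of $f^*$ on $\mathcal{Q}(F_g)$ is precomposition by $f_*$ on $V^*$.

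Next, fix any $q_0\in\mathcal{Q}(F_g)$ and set $\phi(x):=q_0(f_*x)-q_0(x)$. Because $f_*$ preserves the mod-$2$ intersection form, a short computation using $q_0(x+y)=q_0(x)+q_0(y)+x\cdot y$ shows that $\phi\in V^*$ is \emph{linear}, not merely quadratic. Writing a candidate invariant form as $q=q_0+\psi$ with $\psi\in V^*$, the fixed-point condition $f^*q=q$ unpacks to the linear equation
$$\psi\circ(f_*-\mathrm{id})=\phi.$$
Thus part (1) reduces to solvability of this equation, and part (2) to counting its solutions.

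For solvability it suffices to verify that $\phi$ annihilates $\ker(f_*-\mathrm{id})$; this is immediate since $\phi(v)=q_0(v)-q_0(v)=0$ for every $v$ fixed by $f_*$. Hence some $\psi_0\in V^*$ is a solution, and $q_0+\psi_0$ is an $f$-invariant quadratic form. For the count, the full solution set is the coset $\psi_0+W$, where
$$W=\{\psi\in V^*:\psi\circ(f_*-\mathrm{id})=0\}$$
is the annihilator of $\mathrm{image}(f_*-\mathrm{id})$. A standard rank-nullity count gives $\dim W=2g-\dim\mathrm{image}(f_*-\mathrm{id})=\dim\ker(f_*-\mathrm{id})$, so the number of $f$-invariant quadratic forms, and hence of $f$-invariant spin structures, equals $2^{\dim\ker(f_*-\mathrm{id})}=|\ker(f_*-\mathrm{id})|$.

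The argument is essentially linear algebra once the affine structure on $\mathcal{Q}(F_g)$ is set up; I do not foresee a serious obstacle. The two points requiring genuine care are (a) that $\phi$ is actually linear and not merely quadratic---which is precisely where symplecticity of $f_*$ on $H_1(F_g;\mathbb{Z}_2)$ enters---and (b) the elementary but crucial observation that $\phi$ vanishes on $\ker(f_*-\mathrm{id})$, which makes the cohomological obstruction to inverting $f_*-\mathrm{id}$ on the relevant dual vanish.
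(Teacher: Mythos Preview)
Your proposal is correct and follows essentially the same approach as the paper: both fix an arbitrary $q_0$, rewrite the invariance condition as the linear equation $\psi\circ(f_*-\mathrm{id})=q_0-f^*q_0$ in the dual space, observe that the right-hand side vanishes on $\ker(f_*-\mathrm{id})$ to get existence, and then count solutions via the annihilator of $\mathrm{image}(f_*-\mathrm{id})$. The only cosmetic difference is that the paper proves part~(2) by first choosing $q_0$ invariant (so the equation becomes homogeneous), whereas you work directly with the affine coset $\psi_0+W$; both yield the same dimension count.
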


Logically (1) follows from (2). 
But to prove (2), we need first prove (1).

For any $\mathbb{Z}_2$-linear map $\xi:H_1(F_g;\mathbb{Z}_2)\rightarrow \mathbb{Z}_2$ and any quadratic forms $q,q'$ on $H_1(F_g;\mathbb{Z}_2)$, we denote $\xi+q$ the function defined by $x\mapsto \xi(x)+q(x)$ for all $x\in H_1(F_g;\mathbb{Z}_2)$, and denote $q'-q$ the map defined by $x\mapsto q'(x)-q(x)$ for all $x\in H_1(F_g;\mathbb{Z}_2)$. 

\begin{lem}\cite[Lemma 2]{Jo}\label{lem:linear}
	With the above assumption, $\xi+q$ is a quadratic form on $H_1(F_g;\mathbb{Z}_2)$ and $q'-q$ is $\mathbb{Z}_2$-linear. 
\end{lem}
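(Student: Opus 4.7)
The plan is a direct algebraic verification using the defining quadratic relation $q(x+y)=q(x)+q(y)+x\cdot y$ on both sides. Since everything takes values in $\mathbb{Z}_2$, addition and subtraction coincide, and the intersection form $x\cdot y$ will be the quantity that either appears (to witness the quadratic identity) or cancels (to witness linearity).

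For the first claim, I would compute $(\xi+q)(x+y)$ for arbitrary $x,y\in H_1(F_g;\mathbb{Z}_2)$. Expanding,
\begin{align*}
(\xi+q)(x+y) &= \xi(x+y)+q(x+y) \\
&= \xi(x)+\xi(y)+q(x)+q(y)+x\cdot y \\
&= (\xi+q)(x)+(\xi+q)(y)+x\cdot y,
\end{align*}
where the second equality uses $\mathbb{Z}_2$-linearity of $\xi$ and the quadratic relation for $q$. This is exactly the defining relation for a quadratic form, so $\xi+q\in\mathcal{Q}(F_g)$.

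For the second claim, I would compute $(q'-q)(x+y)$ and show the cross term $x\cdot y$ disappears because it contributes once from $q'$ and once from $q$ (and in characteristic $2$ these cancel). Namely,
\begin{align*}
(q'-q)(x+y) &= q'(x+y)-q(x+y) \\
&= \bigl(q'(x)+q'(y)+x\cdot y\bigr)-\bigl(q(x)+q(y)+x\cdot y\bigr) \\
&= (q'-q)(x)+(q'-q)(y),
\end{align*}
which gives the desired $\mathbb{Z}_2$-linearity.

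There is essentially no obstacle here; the statement is a formal consequence of the quadratic identity, and the proof is a two-line expansion for each assertion. The only point worth flagging in the writeup is that the arithmetic is in $\mathbb{Z}_2$, so the sign in ``$q'-q$'' is cosmetic and the cancellation of the intersection-form term is what makes linearity hold.
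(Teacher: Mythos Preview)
Your proof is correct; it is the standard direct verification, and the paper itself does not give a proof but simply cites \cite[Lemma~2]{Jo}, where the same two-line expansion appears.
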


We also need the following known fact from algebra.

\begin{lem}\label{lem:factor}
	Suppose $V,W$ are $\mathbb{Z}_2$-vector spaces, and $\psi:V\rightarrow \mathbb{Z}_2$ and $\phi:V\rightarrow W$ are $\mathbb{Z}_2$-linear. 
	Then there exists a $\mathbb{Z}_2$-linear map $\xi:W\rightarrow \mathbb{Z}_2$ such that $\xi\circ\phi=\psi$ if and only if $ker\phi\subseteq ker\psi$.
\end{lem}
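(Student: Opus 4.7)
The plan is to prove the two directions of the biconditional separately, both by direct construction/verification, since this is a standard linear algebra result that holds for vector spaces over any field.

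For the easy direction ($\Rightarrow$), I would simply observe that if $\xi \circ \phi = \psi$ and $x \in \ker\phi$, then $\psi(x) = \xi(\phi(x)) = \xi(0) = 0$, so $x \in \ker\psi$. This takes one line.

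For the nontrivial direction ($\Leftarrow$), assume $\ker\phi \subseteq \ker\psi$. The idea is to first define $\xi$ on the image $\phi(V) \subseteq W$ by the rule $\xi(\phi(x)) := \psi(x)$, and then extend arbitrarily (for instance, by zero) to a complementary subspace of $\phi(V)$ in $W$. The key point to check is well-definedness of $\xi$ on $\phi(V)$: if $\phi(x) = \phi(x')$, then $x - x' \in \ker\phi \subseteq \ker\psi$, so $\psi(x) = \psi(x')$. Linearity on $\phi(V)$ follows from linearity of $\psi$ and $\phi$, and then picking any $\mathbb{Z}_2$-linear complement $U$ with $W = \phi(V) \oplus U$ lets us extend $\xi$ to a linear map on all of $W$ by setting $\xi|_U = 0$. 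By construction $\xi \circ \phi = \psi$ on $V$.

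The main (and only) subtle step is well-definedness of $\xi$ on the image, which is exactly where the hypothesis $\ker\phi \subseteq \ker\psi$ is used. The existence of a complementary subspace $U$ requires no finite-dimensionality assumption in principle (one invokes a Hamel basis), but since in the intended application $V = W = H_1(F_g;\mathbb{Z}_2)$ is finite-dimensional, no set-theoretic subtlety arises. I would present the proof in this order to keep it short and self-contained.
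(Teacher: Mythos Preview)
Your proof is correct and is the standard argument for this factorization lemma. The paper itself does not give a proof: it simply records the statement as ``the following known fact from algebra'' and moves on, so your write-up in fact supplies what the paper omits.
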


\begin{proof}[Proof of Theorem \ref{SIS}] 
	(1) Pick an arbitrary quadratic form $q_0\in \mathcal{Q}(F_g)$ and fix it. 
	For $q\in\mathcal{Q}(F_g)$, let $\xi=q-q_0$ be the $\mathbb{Z}_2$-linear map such that $q=\xi+q_0$. 
	Transform the equation $f^*q=q$:
	\begin{displaymath}
		\begin{split}
			f^*q=q & \Leftrightarrow f^*(\xi+q_0)=\xi+q_0\\
			&\Leftrightarrow f^*\xi+f^*q_0=\xi+q_0\\
			& \Leftrightarrow f^*\xi-\xi=q_0-f^*q_0\\
			& \Leftrightarrow \xi\circ(f_*-id)=q_0-f^*q_0.
		\end{split}
	\end{displaymath}
	By Lemma \ref{lem:linear}, $q_0-f^*q_0$ is a $\mathbb{Z}_2$-linear map. 
	Then by Lemma \ref{lem:factor}, we only need to verify $ker(f_*-id)\subseteq ker(q_0-f^*q_0)$. 
	It is straightforward: 
	for any $x\in ker(f_*-id)$, $f_*(x)=x$, so
	\begin{displaymath}
		(q_0-f^*q_0)(x)=q_0(x)-q_0(f_*(x))=q_0(x)-q_0(x)=0.
	\end{displaymath}
	
	(2) By (1) we may fix $q_0\in\mathcal{Q}(F_g)$ with $f^*q_0=q_0$.
	Then
	\begin{displaymath}
		\begin{split}
			f^*q=q & \Leftrightarrow \xi\circ(f_*-id)=q_0-f^*q_0 \\
			& \Leftrightarrow \xi|_{Im(f_*-id)}=0.
		\end{split}
	\end{displaymath}
	Let $W$ be a direct sum complement of the image $Im(f_*-id)$ in $H_1(F_g;\mathbb{Z}_2)$. Then the function
	$$q\mapsto \xi|_W$$
	defines a bijection between 
	the set of $f$-invariant quadratic forms to the hom-space ${\rm Hom}(W,\mathbb{Z}_2)$.
	So the number of $f$-invariant quadratic forms equals the cardinality of ${\rm Hom}(W,\mathbb{Z}_2)$. 
	Note that
	$${\rm Hom}(W,\mathbb{Z}_2)\cong W$$
	as $\mathbb{Z}_2$-vector spaces, so we have 
	\begin{displaymath}
		\begin{split}
			{\rm dim}\,{\rm Hom}(W,\mathbb{Z}_2)={\rm dim}W &={\rm dim}H_1(F_g;\mathbb{Z}_2)-{\rm dim}Im(f_*-id)\\
			&={\rm dim}ker(f_*-id).
		\end{split}
	\end{displaymath}
	Thus $|{\rm Hom}(W, \mathbb Z_2)|=|ker(f_*-id)|$, and the theorem follows.
\end{proof}

\begin{prop}\label{prop:Arf}
	Suppose that $f$ is an automorphism of $F_g$ and $q_0$ is an $f$-invariant quadratic form on $H_1(F_g;\mathbb{Z}_2)$.
	Let $\{\xi_1,\xi_2,\cdots,\xi_d\}$ be a basis of the $\mathbb{Z}_2$-linear space 
	$$\{\xi:H_1(F_g;\mathbb{Z}_2)\to\mathbb{Z}_2\,|\,\xi\circ(f_*-id)=0\}.$$
	Then there exists an $f$-invariant quadratic form with Arf invariant $0$ if and only if $0$ belongs to the set
	$$\{\Arf(q_0),\Arf(q_0+\xi_i),\Arf(q_0+\xi_i+\xi_j)\,|\,i,j=1,2,\cdots,d,i\neq j\}.$$
\end{prop}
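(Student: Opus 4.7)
The plan is to reduce the question to a statement about the restriction of $q_0$ to a certain $d$-dimensional subspace of $H_1(F_g;\mathbb{Z}_2)$, and then to show that ``vanishing on singletons and pairs forces vanishing everywhere'' for a quadratic form.

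First, from the proof of Theorem \ref{SIS}(2), the $f$-invariant quadratic forms are exactly $\{q_0+\xi : \xi\in V\}$, where $V=\mathrm{span}(\xi_1,\dots,\xi_d)$ is the subspace from the statement. Using non-degeneracy of the mod-$2$ intersection form, I would identify each linear functional $\xi$ with its Poincar\'e dual $v_\xi\in H_1(F_g;\mathbb{Z}_2)$, characterized by $\xi(x)=x\cdot v_\xi$. A direct computation in a symplectic basis (expand $(q_0(a_i)+\xi(a_i))(q_0(b_i)+\xi(b_i))$ and recognize the cross-terms as the expansion of $q_0(v_\xi)$ in the same basis) then yields the key identity
$$\Arf(q_0+\xi)=\Arf(q_0)+q_0(v_\xi).$$
Setting $v_i:=v_{\xi_i}$, the $v_i$'s span a $d$-dimensional subspace $V^*\subseteq H_1(F_g;\mathbb{Z}_2)$, and the existence of an $f$-invariant quadratic form with vanishing Arf invariant becomes equivalent to the existence of some $v\in V^*$ with $q_0(v)=\Arf(q_0)$.

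Next I would split into cases. If $\Arf(q_0)=0$, there is nothing to do and $\Arf(q_0)$ itself is already in the listed set. If $\Arf(q_0)=1$, I would assume for contradiction that every listed Arf value equals $1$, i.e., $q_0(v_i)=0$ for each $i$ and $q_0(v_i+v_j)=0$ for each $i\neq j$. The quadratic-form identity
$$q_0(v_i+v_j)=q_0(v_i)+q_0(v_j)+v_i\cdot v_j$$
then forces $v_i\cdot v_j=0$ for all $i\neq j$. A short induction on the number of summands, using $q_0(u+v)=q_0(u)+q_0(v)+u\cdot v$ repeatedly, gives $q_0\bigl(\sum_{i\in I}v_i\bigr)=0$ for every $I\subseteq\{1,\dots,d\}$, so $q_0\equiv 0$ on $V^*$ and no $f$-invariant quadratic form has Arf invariant $0$. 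The converse direction is immediate: any listed Arf value equal to $0$ exhibits an $f$-invariant quadratic form with Arf invariant $0$.

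The one place that requires care is the symplectic-basis computation of $\Arf(q_0+\xi)=\Arf(q_0)+q_0(v_\xi)$. This is a standard identity for quadratic refinements of a symplectic form over $\mathbb{Z}_2$, but it is the only genuinely computational step. Once it is in hand, the conceptual content of the proposition is simply that a quadratic form on a $\mathbb{Z}_2$-space is determined by its values on a basis together with the restriction of the underlying bilinear form, and both pieces of data are encoded by the singletons $q_0(v_i)$ and the pairs $q_0(v_i+v_j)$.
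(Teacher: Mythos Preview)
Your proof is correct. Both your argument and the paper's hinge on the same underlying fact: the function $\xi\mapsto \Arf(q_0+\xi)$ is a degree-$2$ polynomial in the coordinates $t_1,\dots,t_d$ of $\xi=\sum t_i\xi_i$, so its values on $0$, on each $\xi_i$, and on each $\xi_i+\xi_j$ determine it completely. The paper establishes this by brute expansion of $\Arf(q_0+\sum t_i\xi_i)$ in a symplectic basis, extracting the constant, linear, and bilinear coefficients $A,B_i,C_{ij}$ directly from the listed Arf values, and observing that if all listed values equal $1$ then $A=1$ and $B_i=C_{ij}=0$.

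Your route is more intrinsic: you first prove the identity $\Arf(q_0+\xi)=\Arf(q_0)+q_0(v_\xi)$ via Poincar\'e duality, which \emph{explains} the quadratic dependence rather than merely exhibiting it, and then transfer the problem to the vanishing of the quadratic form $q_0$ on the dual subspace $V^*$. The reduction ``$q_0(v_i)=0$ and $q_0(v_i+v_j)=0$ force $v_i\cdot v_j=0$ and hence $q_0\equiv 0$ on $V^*$'' is exactly the paper's ``$A=1,\,B_i=C_{ij}=0$'' unwound through the duality. What your approach buys is a cleaner conceptual picture and a reusable identity; what the paper's buys is that it avoids introducing Poincar\'e duals and works entirely with the raw Arf expansion.
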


\begin{proof}
	By the proof of Theorem \ref{SIS} (2), $q\in\mathcal{Q}(F_g)$ is $f$-invariant if and only if $q-q_0$ belongs to
	$$\{\xi:H_1(F_g;\mathbb{Z}_2)\to\mathbb{Z}_2\,|\,\xi\circ(f_*-id)=0\},$$
	i.e., $q$ has the form 
	$$q_0+\sum_{i=1}^{d}t_i\xi_i,\,t_1,t_2,\cdots,t_d\in\mathbb{Z}_2.$$
	So it suffices to prove the ``only if" part.
	Take a symplectic basis $\{a_1,b_1,\cdots,a_g,b_g\}$ of $H_1(F_g;\mathbb{Z}_2)$.
	Expanding
	\begin{displaymath}
		\begin{split}
			\Arf\left(q_0+\sum_{i=1}^{d}t_i\xi_i\right) &  =\sum_{j=1}^{g}\left(q_0+\sum_{i=1}^{d}t_i\xi_i\right)(a_j)\cdot\left(q_0+\sum_{i=1}^{d}t_i\xi_i\right)(b_j)\\
			&=\sum_{j=1}^{g}\left(q_0(a_j)+\sum_{i=1}^{d}t_i\xi_i(a_j)\right)\left(q_0(b_j)+\sum_{i=1}^{d}t_i\xi_i(b_j)\right),
		\end{split}
	\end{displaymath}
	we see that it can be expressed as a polynomial of $t_1,t_2,\cdots,t_d$:
	$$\Arf\left(q_0+\sum_{i=1}^{d}t_i\xi_i\right) =A+\sum_{i=1}^{d}B_it_i+\sum_{1\leq i<j\leq d}C_{ij}t_it_j.$$
	The coefficients $A,B_i,C_{ij}$ can be determined as follows:
	\begin{itemize}
		\item Take $t_i=0$, $\forall i$, then we see $A=\Arf(q_0);$
		\item Fix $i$ and take $t_i=1,t_j=0,\forall j\neq i$, then we see $B_i=\Arf(q_0+\xi_i)-A$;
		\item Fix $i<j$ and take $t_i=t_j=1,t_k=0,\forall k\neq i,j$, then we see
		$$C_{ij}=\Arf(q_0+\xi_i+\xi_j)-A-B_i-B_j.$$
	\end{itemize}
	If $0$ does not belong to the set
	$$\{\Arf(q_0),\Arf(q_0+\xi_i),\Arf(q_0+\xi_i+\xi_j)\,|\,i,j=1,2,\cdots,d,i\neq j\},$$
	then $A=1,B_i=C_{ij}=0$, which makes the Arf invariant of any $f$-invariant quadratic form $q_0+\sum_{i=1}^{d}t_i\xi_i$ a constant $1$. 
	This completes our proof.
\end{proof}

\begin{cor}\label{cor:unique}
	Suppose that $f$ is an automorphism of $F_g$ and $f_*-id:H_1(F_g;\mathbb{Z}_2)\rightarrow H_1(F_g;\mathbb{Z}_2)$ is bijective. 
	Then there exists a unique $f$-invariant quadratic form $q$ on $H_1(F_g;\mathbb{Z}_2)$. 
	Moreover, $q$ is explicitly given by 
	$$q(x)=x\cdot (f_*-id)^{-1}(x),\,\forall\,x\in H_1(F_g;\mathbb{Z}_2).$$
\end{cor}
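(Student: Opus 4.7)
The plan is to treat the two assertions separately: uniqueness is immediate, and the only real content is verifying that the stated formula defines an $f$-invariant quadratic form.

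For uniqueness, I would simply invoke Theorem \ref{SIS}(2): when $f_* - \mathrm{id}$ is bijective, $\ker(f_* - \mathrm{id}) = 0$, so the number of $f$-invariant spin structures (equivalently, $f$-invariant quadratic forms) is $2^0 = 1$. Hence any $q$ that we can verify to be quadratic and $f$-invariant must automatically be the unique one.

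For the formula, set $q(x) = x \cdot (f_* - \mathrm{id})^{-1}(x)$ and write $\phi = (f_* - \mathrm{id})^{-1}$ for brevity. I would first check the quadratic identity. Expanding $q(x+y) = (x+y) \cdot (\phi(x) + \phi(y))$ using bilinearity of the intersection form gives
\begin{displaymath}
    q(x+y) = q(x) + q(y) + x \cdot \phi(y) + y \cdot \phi(x),
\end{displaymath}
so the task reduces to showing $x \cdot \phi(y) + y \cdot \phi(x) = x \cdot y$ modulo $2$. Writing $u = \phi(x)$, $v = \phi(y)$ so that $x = f_*u - u$ and $y = f_*v - v$, and using that $f_*$ preserves the mod-$2$ intersection form (i.e.\ $f_*u \cdot f_*v = u \cdot v$) together with its symmetry, both sides reduce to $f_*u \cdot v + u \cdot f_*v$ after mod-$2$ cancellation of the $u \cdot v$ terms. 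This is the main calculation, and it is the step I would be most careful about, since one has to exploit symplecticity of $f_*$ at exactly the right moment.

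For $f$-invariance, the key observation is that $f_*$ commutes with $f_* - \mathrm{id}$ and hence with $\phi$. Therefore
\begin{displaymath}
    f^*q(x) = f_*x \cdot \phi(f_*x) = f_*x \cdot f_*\phi(x) = x \cdot \phi(x) = q(x),
\end{displaymath}
again using that $f_*$ preserves the intersection form. Combined with the quadratic property and the uniqueness statement above, this completes the proof. I expect no serious obstacle beyond keeping track of the symplectic identity $f_*u \cdot f_*v = u \cdot v$ when verifying the quadratic identity.
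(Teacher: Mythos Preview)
Your proof is correct, but the route differs from the paper's. The paper invokes Theorem~\ref{SIS} for both existence and uniqueness, then takes the (already known to exist) $f$-invariant quadratic form $q$ and computes its value: writing $x=f_*(y)-y$ with $y=(f_*-id)^{-1}(x)$, invariance gives $q(f_*(y))=q(y)$, so $q(x)=q(f_*(y)-y)=q(f_*(y))+q(y)+f_*(y)\cdot y=f_*(y)\cdot y=x\cdot y$. This is a one-line derivation once existence is in hand. Your approach instead starts from the formula and verifies directly that it is quadratic and $f$-invariant; this costs you the extra (but entirely correct) symplectic computation $x\cdot\phi(y)+y\cdot\phi(x)=x\cdot y$, but in exchange you never need the existence part of Theorem~\ref{SIS} --- your verification \emph{is} the existence proof. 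Either way works; the paper's is shorter, yours is more self-contained.
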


\begin{proof} 
	The existence and uniqueness follow immediately from Theorem \ref{SIS}, so we only need to prove the moreover part. 
	For any $x\in H_1(F_g;\mathbb{Z}_2)$, as $f_*-id$ is bijective, we can take $y$ with $x=(f_*-id)(y)=f_*(y)-y.$
	For the $f$-invariant quadratic form $q$, we have $q(f_*(y))=q(y)=-q(y)$, so
	\begin{displaymath}
		\begin{split}
			q(x)=q(f_*(y)-y)=q(f_*(y))+q(y)+f_*(y)\cdot y=f_*(y)\cdot y \\
			=f_*(y)\cdot y-y\cdot y=(f_*(y)-y)\cdot y=x\cdot (f_*-id)^{-1}(x).
		\end{split}
	\end{displaymath} 
\end{proof}

\section{Applications on extending surface automorphisms over $S^4$}\label{sect:applications}

For any two mapping classes $[f_1]\in\MCG(F_{g_1}), [f_2]\in\MCG(F_{g_2})$, we can always choose representatives $f_1$ and $f_2$ such that each $f_i(i=1,2)$ coincides with the identity map on a closed disk $D_i$ in $F_{g_i}$. 
When we glue $F_{g_1}-{\rm Int}(D_1)$ with $F_{g_2}-{\rm Int}(D_2)$ along their oriented boundaries via an orientation-reversing homeomorphism, $f_1|_{F_{g_1}-{\rm Int}(D_1)}$ and  $f_2|_{F_{g_2}-{\rm Int}(D_2)}$ together provide an automorphism $f$ of $F_{g_1+g_2}=F_{g_1}\# F_{g_2}$. 
We call $[f]$ a \textit{connected sum} of $[f_1]$ and $[f_2]$, denoted as $[f]=[f_1]\#[f_2]$.
Similarly we can construct a connected sum of finitely many mapping classes on surfaces. 

\begin{prop}\label{prop:connectedsum}
	Suppose that $g=g_1+g_2+\cdots +g_k(k>0)$, and $[f]\in\MCG(F_g)$ is a connected sum of $[f_i]\in\MCG(F_{g_i})(i=1,2,\cdots,k)$. 
	
	{\rm (1)} If some $f_i$ has both bounding and unbounding invariant spin structures, then $f$ is extendable over $S^4$. 
	
	{\rm (2)} Otherwise $f$ is extendable if and only if the number of those $f_i$ with only unbounding invariant spin structures is even.	
\end{prop}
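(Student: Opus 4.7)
My plan is to reduce the question, via Theorem~\ref{thm:extendability}, to the existence of an $f$-invariant quadratic form with vanishing Arf invariant, and then to exploit how invariant quadratic forms behave under connected sum.

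The first step is to set up the direct sum decomposition. Since each $f_i$ is the identity on the disk $D_i$, representative curves for a symplectic basis of $H_1(F_{g_i};\mathbb{Z}_2)$ can be chosen disjoint from $D_i$ and hence survive in $F_g$. Their images provide a symplectic basis realising the splitting
\[
H_1(F_g;\mathbb{Z}_2)=\bigoplus_{i=1}^k H_1(F_{g_i};\mathbb{Z}_2),
\]
on which $f_*=\bigoplus_{i=1}^k (f_i)_*$ acts block-diagonally, and on which the mod-$2$ intersection numbers between classes from different summands vanish (because representatives lie in disjoint subsurfaces separated by the connecting spheres). For any $q\in\mathcal{Q}(F_g)$, writing $q_i:=q|_{H_1(F_{g_i};\mathbb{Z}_2)}$, the quadratic identity together with the vanishing cross intersection numbers gives
\[
q(x_1+\cdots+x_k)=\sum_{i=1}^k q_i(x_i),\qquad x_i\in H_1(F_{g_i};\mathbb{Z}_2).
\]
In particular $q$ is $f$-invariant if and only if every $q_i$ is $f_i$-invariant, and the Arf invariant computed on the adapted symplectic basis splits additively as $\Arf(q)=\sum_{i=1}^k \Arf(q_i)$. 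This reduces the proposition to the combinatorial question: can one choose $f_i$-invariant quadratic forms $q_i\in\mathcal{Q}(F_{g_i})$ so that $\sum_i \Arf(q_i)=0$ in $\mathbb{Z}_2$?

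For part~(1), suppose $f_{i_0}$ admits both a bounding and an unbounding invariant quadratic form. By Theorem~\ref{IS}, each of the remaining $f_i$ admits some invariant quadratic form $q_i$; fix such choices and let $s=\sum_{i\neq i_0}\Arf(q_i)$. Then choose $q_{i_0}$ to be $f_{i_0}$-invariant with $\Arf(q_{i_0})=s$, which is possible by hypothesis, so that the total Arf invariant is $0$ and Theorem~\ref{thm:extendability} gives extendability. For part~(2), since each $f_i$ admits only one kind of invariant form, the value $\epsilon_i:=\Arf(q_i)\in\{0,1\}$ is the same for every $f_i$-invariant $q_i$, and the Arf invariant of every $f$-invariant quadratic form equals $\sum_i \epsilon_i \pmod 2$. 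This vanishes exactly when the number of indices with $\epsilon_i=1$, that is, the number of $f_i$ with only unbounding invariant spin structures, is even, which is again equivalent to extendability by Theorem~\ref{thm:extendability}.

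The main technical point is the splitting of quadratic forms as direct sums, which rests on the vanishing of cross intersection numbers and on the observation that adjusting an $f_i$-invariant $q_i$ on a single summand is enough to control the total Arf invariant. Once that bookkeeping is set up, the rest is a straightforward $\mathbb{Z}_2$ parity argument.
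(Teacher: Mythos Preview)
Your proof is correct and follows essentially the same approach as the paper: both arguments use the orthogonal direct-sum decomposition $H_1(F_g;\mathbb{Z}_2)=\bigoplus_i H_1(F_{g_i};\mathbb{Z}_2)$ to identify $f$-invariant quadratic forms with tuples $(q_1,\dots,q_k)$ of $f_i$-invariant forms and the additivity $\Arf(q)=\sum_i\Arf(q_i)$, then reduce to the same $\mathbb{Z}_2$ parity count. Your write-up is in fact a bit more explicit than the paper's in justifying the vanishing of cross intersection numbers and the resulting bijection between invariant forms on $F_g$ and tuples of invariant forms on the pieces.
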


\begin{proof} 
	(1) By Theorem \ref{IS}, each $f_i$ has an invariant  quadratic form $q_i$ on $H_1(F_{g_i};\mathbb{Z}_2)$.
	They together give an $f$-invariant quadratic form $q$ on
	\begin{displaymath}
		H_1(F_g;\mathbb{Z}_2)\cong\bigoplus_{i=1}^{k}H_1(F_{g_i};\mathbb{Z}_2),
	\end{displaymath}
	which equals $q_i$ on $H_1(F_{g_i};\mathbb{Z}_2)$.
	Without loss of generality, assume that  $f_1$ has both bounding and unbounding invariant spin structures, then we can choose an $f_1$-invariant quadratic form $q_1$ such that $\Arf(q_1)=\sum\limits_{i=2}^{k}\Arf(q_i)$,  thus 
	\begin{displaymath}
		\Arf(q)=\sum_{i=1}^{k}\Arf(q_i)=0.
	\end{displaymath}
	By Theorem \ref{thm:extendability}, $f$ is extendable over $S^4$.
	
	(2) Otherwise let $j$ be the number of $f_i$ with only unbounding invariant spin structures,
	then the number of $f_i$ with only bounding invariant spin structures is $k-j$. 
	Note that $f$ acts on 
	\begin{displaymath}
		F_g=F_{g_1}\# F_{g_2}\#\cdots\# F_{g_k}
	\end{displaymath}
	as $f_i$ on each $F_{g_i}$ part, 
	so any $f$-invariant quadratic form $q$ induces an $f_i$-invariant quadratic form $q_i$ on $H_1(F_{g_i};\mathbb{Z}_2)$.
	It follows that 
	\begin{displaymath}
		\Arf(q)=\sum_{i=1}^{k}\Arf(q_i)=j\times 1 + (k-j)\times 0=j.
	\end{displaymath}
	By Theorem \ref{thm:extendability}, $f$ is extendable over $S^4$ if and only if $j$ is even.
\end{proof}

\begin{proof}[Proof of Theorem \ref{stable}]
	According to \cite[Theorem 3']{Ati}, the number of bounding spin structures on $F_g$ is $2^{g-1}(2^g+1)$, while the number of unbounding spin structures is $2^{g-1}(2^g-1)$.
	Particularly, there are three bounding spin structures and one unbounding spin structure on the torus $T$.
	Clearly the identity map $I_T$ keeps each spin structure on $T$ invariant, so it has both bounding and unbounding invariant spin structures. 
	Then the conclusion follows from Proposition \ref{prop:connectedsum} (1).
\end{proof}

Theorem \ref{puncture} can be stated in a stronger version as follows. 

\begin{prop}\label{prop:finitetype}
	Suppose $f$ is an automorphism of $F_{g, k}$ and 
	there exists a boundary component  whose $f$-orbit is of odd length, then $f$ is extendable over $S^4$.
\end{prop}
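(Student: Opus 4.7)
The plan is to extend $f$ to an automorphism $\bar f$ of a closed surface, arranged so that $\bar f$ has a bounding invariant spin structure, and then to invoke Theorem \ref{WW3.1} and restrict. Let $O=\{\partial_{i_1},\dots,\partial_{i_\ell}\}$ be an $f$-orbit of boundary components with $\ell$ odd. First I would cap each boundary of $F_{g,k}$ outside $O$ by a disk, equivariantly with respect to $f$ (each remaining orbit of boundaries capped by disks cyclically permuted, identity action on each, well defined up to isotopy by the Alexander trick). Next I would cap each boundary in $O$ by a handle: fix copies $T_1,\dots,T_\ell$ of $F_{1,1}$, glue them to the components of $O$, and extend the automorphism so that it permutes the handles cyclically $T_j\mapsto T_{j+1}$ via a fixed identification (identity on each handle). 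This produces a closed surface $F_{g+\ell}$ carrying an automorphism $\bar f$ with $\bar f|_{F_{g,k}}=f$.

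To exhibit a bounding invariant spin structure for $\bar f$, compare with the closed surface $F_g$ obtained by capping every boundary (inside or outside $O$) by a disk instead, yielding an automorphism $\hat f$. There is a natural $\bar f$-invariant orthogonal splitting
\[
H_1(F_{g+\ell};\mathbb{Z}_2)\cong H_1(F_g;\mathbb{Z}_2)\oplus\bigoplus_{j=1}^\ell \langle c_j,d_j\rangle,
\]
where $\{c_j,d_j\}$ is a standard symplectic pair supported in $T_j$. Joining a symplectic basis $\{a_i,b_i\}_{i=1}^g$ of $H_1(F_g;\mathbb{Z}_2)$ with the $\{c_j,d_j\}$ gives a symplectic basis of $H_1(F_{g+\ell};\mathbb{Z}_2)$. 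By Theorem \ref{IS}, $\hat f$ admits an invariant quadratic form $\hat q$ on $H_1(F_g;\mathbb{Z}_2)$. I would extend $\hat q$ to $q$ on $H_1(F_{g+\ell};\mathbb{Z}_2)$ by setting $q(c_j)=\gamma$ and $q(d_j)=\delta$ (these must be independent of $j$ since $\bar f$ cyclically permutes the handles; bilinearity of the intersection form together with the orthogonality of the splitting then give $\bar f$-invariance of $q$). A direct computation in the symplectic basis yields
\[
\Arf(q)=\Arf(\hat q)+\ell\gamma\delta\equiv \Arf(\hat q)+\gamma\delta\pmod 2,
\]
using that $\ell$ is odd.

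Hence the choices $(\gamma,\delta)=(0,0)$ and $(\gamma,\delta)=(1,1)$ realize both parities of $\Arf(q)$, so one of them produces a bounding $\bar f$-invariant spin structure. Theorem \ref{WW3.1} then provides an embedding $e\colon F_{g+\ell}\hookrightarrow S^4$ and $\tilde f\in\Aut(S^4)$ with $\tilde f\circ e=e\circ\bar f$; restricting $e$ to the $\bar f$-invariant subsurface $F_{g,k}\subset F_{g+\ell}$ exhibits $f$ as extendable over $S^4$. The crux of the plan, and its main obstacle conceptually, is the choice to cap $O$ by handles rather than disks: this is what introduces the two free parameters $(\gamma,\delta)$, and it is precisely the oddness of $\ell$ that makes $\ell\gamma\delta\equiv\gamma\delta\pmod 2$ nontrivial, giving the flexibility to correct $\Arf(\hat q)$. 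For even $\ell$ the handle contribution would vanish identically in $\mathbb{Z}_2$ and the method would collapse, consistent with the essential role of the parity hypothesis.
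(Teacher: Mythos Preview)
Your proof is correct and follows essentially the same approach as the paper's: extend $f$ to a closed surface by capping boundaries, use the handles attached along the odd-length orbit to adjust the Arf invariant of an invariant quadratic form (whose existence on the ``base'' part comes from Theorem~\ref{IS}), and invoke Theorem~\ref{WW3.1}. The only difference is cosmetic: the paper caps \emph{all} boundary orbits with copies of $F_{1,1}$ and then chooses invariant quadratic forms on each handle-orbit separately, whereas you cap the non-odd orbits with disks, which streamlines the bookkeeping to a single pair of parameters $(\gamma,\delta)$.
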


\begin{proof}
	Suppose the $k$ boundary components of $F_{g,k}$ has $m$ $f$-orbits, denoted by 
	$$\{b_{i,j}:j=1,2,\cdots,k_i\},\,i=1,2,\cdots,m$$ with $b_{i,j+1}=f^j(b_{i,1})(1\leq j<k_i)$ and $f^{k_i}(b_{i,1})=b_{i,1}$.
	By isotopy, we may assume that the $f$-action on the union of any orbit
	$$\bigcup_{j=1}^{k_i}b_{i,j}\,(i=1,2,\cdots,m)$$
	is periodic, i.e., $f^{k_i}$ coincides with the identity on it.
	
	Take $k$ copies $X_{i,j}(1\leq i\leq m,1\leq j\leq k_i)$ of $F_{1,1}$.
	Glue them with corresponding $b_{i,j}$ to obtain a closed surface $F_{g+k}$. 
	The gluing can be made $f$-equivariantly, so $f$ extends to an automorphism $\hat{f}$ of $F_{g+k}$ which acts on each
	$$\bigcup_{j=1}^{k_i}X_{i,j}\,(i=1,2,\cdots,m)$$
	periodically.
	For any $\hat{f}$-invariant quadratic form $\hat{q}$ on
	$$H_1(F_{g+k};\mathbb{Z}_2)\cong H_1(F_{g,k};\mathbb{Z}_2)\oplus\left(\bigoplus_{i=1}^{m}\bigoplus_{j=1}^{k_i}H_1(X_{i,j};\mathbb{Z}_2)\right),$$
	$\hat{q}$ induces invariant quadratic forms $q$ on $H_1(F_{g,k};\mathbb{Z}_2)$ and $q_i$ on 
	$\bigoplus\limits_{j=1}^{k_i}H_1(X_{i,j};\mathbb{Z}_2)$.
	
	Assume $k_1$ is odd. 
	Fix invariant quadratic forms $q$ and $q_2,q_3,\cdots,q_m$.
	Then choose a quadratic form $q_{1,1}$ on $H_1(X_{1,1};\mathbb{Z}_2)$ such that
	$$\Arf(q_{1,1})=\Arf(q)+\sum_{i=2}^{m}\Arf(q_i).$$
	Let $q_{1,j}(j=1,\cdots,k_1)$ be the pull-back $(\hat{f}^{1-j})^*q_{1,1}$ on $H_1(X_{i,j};\mathbb{Z}_2)$. 
	Then $q_{1,1},q_{1,2},\cdots,q_{1,k_1}$ and $q,q_2,q_3,\cdots,q_m$
	together provide an $\hat{f}$-invariant quadratic form $\hat{q}'$ on $H_1(F_{g+k};\mathbb{Z}_2)$.
	As $\Arf(q_{1,j})=\Arf(q_{1,1})$, we have
	\begin{displaymath}
		\begin{split}
			\Arf(\hat{q}')& =\sum_{j=1}^{k_1}\Arf(q_{1,j})+\Arf(q)+\sum_{i=2}^{m}\Arf(q_i)\\
			& =k_1\times \Arf(q_{1,1})+\Arf(q)+\sum_{i=2}^{m}\Arf(q_i)=0.
		\end{split}
	\end{displaymath}
	By Theorem \ref{thm:extendability}, $\hat{f}$ is extendable over $S^4$, thus so is $f$.
\end{proof}

Below we focus on periodic automorphisms of surfaces.

Suppose that $f_1\in\Aut(F_{g_1}),f_2\in\Aut(F_{g_2})$ are periodic maps of the same order $n$ and each has an isolated fixed point. 
For each $i=1,2$, let $x_i\in F_{g_i}$ be an isolated fixed point of $f_i$, and take an $f_i$-invariant closed neighborhood $U_i\subset F_{g_i}$ with $\varphi_i:U_i\xrightarrow{\cong}D^2$, such that $\varphi_i\circ f_i\circ\varphi_i^{-1}$ is an order-$n$ rotation of the standard closed unit disk $D^2$ (Fig. \ref{fig:periodic-connected-sum}). 
By replacing $f_2$ with some power of it, we may assume that $\varphi_1\circ f_1\circ\varphi_1^{-1}=\varphi_2\circ f_2\circ\varphi_2^{-1}$. 
Glue $F_{g_1}-{\rm Int}(U_1)$ with $F_{g_2}-{\rm Int}(U_2)$ by identifying every point $x\in\partial U_1$ with $\varphi_2^{-1}\circ\varphi_1(x)\in\partial U_2$. 
We obtain $F_{g_1+g_2}\cong F_{g_1}\#\overline{F_{g_2}}$, where $\overline{F_{g_2}}$ is $F_2$ with the orientation reversed.
And there is a map $f$ of period $n$ on $F_{g+n}$, which coincides with $f_i$ on $F_{g_i}-{\rm Int}(U_i)\,(i=1,2)$. 
We call $f$ a \textit{periodic connected sum} of $f_1,f_2$. 
Similarly, we can construct a periodic connected sum of several periodic maps if they are of the same period and have enough fixed points for connecting. 

\input{periodic-connected-sum.TpX}

With the same argument for Proposition \ref{prop:connectedsum} we have the following conclusion.

\begin{prop}\label{prop:periodicconnectedsum}
	Suppose that $g=g_1+g_2+\cdots g_k(k>0)$, and $f\in\Aut(F_g)$ is a periodic connected sum of $f_i\in\Aut(F_{g_i})(i=1,2,\cdots,k)$. 
	
	{\rm (1)} If some $f_i$ has both bounding and unbounding invariant spin structures, then $f$ is extendable over $S^4$. 
	
	{\rm (2)} Otherwise $f$ is extendable if and only if the number of those $f_i$ with only unbounding invariant spin structures is even.	
\end{prop}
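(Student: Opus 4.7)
The plan is to mirror the proof of Proposition \ref{prop:connectedsum}. The only genuinely new geometric point is to verify that the direct-sum decomposition of $H_1$ is still $f$-equivariant under the periodic connected sum; once that is secured, the Arf-invariant bookkeeping is formally identical.

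First I would set $X_i := F_{g_i} \setminus \mathrm{Int}(U_i)$. Since $U_i$ is a disk and $f_i(U_i) = U_i$, inclusion gives an $f_i$-equivariant isomorphism $H_1(X_i;\mathbb{Z}_2) \cong H_1(F_{g_i};\mathbb{Z}_2)$. A Mayer--Vietoris computation, using that each boundary circle $\partial U_i$ is null-homologous in $X_i$ (it bounds a genus-$g_i$ surface with one boundary component), yields
$$H_1(F_g;\mathbb{Z}_2) \;\cong\; \bigoplus_{i=1}^{k} H_1(X_i;\mathbb{Z}_2) \;\cong\; \bigoplus_{i=1}^{k} H_1(F_{g_i};\mathbb{Z}_2),$$
with $f$ acting as $f_i$ on the $i$-th summand (because $f|_{X_i} = f_i|_{X_i}$ by construction). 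Because the $X_i$'s meet only along the null-homologous identification circles, the summands are mutually orthogonal with respect to the mod-$2$ intersection form, and the pairing restricts to the usual intersection form on each $H_1(F_{g_i};\mathbb{Z}_2)$.

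Second, orthogonality of the summands forces every quadratic refinement $q$ on $H_1(F_g;\mathbb{Z}_2)$ to satisfy $q(\sum x_i) = \sum q(x_i)$, so restriction gives a bijection between quadratic forms on $H_1(F_g;\mathbb{Z}_2)$ and tuples of quadratic forms on the summands, respecting $f$- vs.\ $f_i$-invariance and yielding the Arf-additivity
$$\Arf(q) \;=\; \sum_{i=1}^{k}\Arf(q_i).$$

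With these structural facts the two statements follow exactly as in Proposition \ref{prop:connectedsum}. For (1), given an $f_1$ with invariant forms of both Arf values, I would pick any $f_i$-invariant $q_i$ for $i\geq 2$ and then choose $q_1$ so that $\sum_{i}\Arf(q_i) = 0$; Theorem \ref{thm:extendability} then yields extendability. For (2), every $f_i$-invariant form has a fixed Arf value (the common invariant of $f_i$), so $\Arf(q) \equiv j \pmod 2$ for every $f$-invariant $q$, where $j$ counts the $f_i$ admitting only unbounding invariant spin structures; Theorem \ref{thm:extendability} then gives the parity criterion. The main obstacle is really only Step~1---keeping the homology bookkeeping clean around the gluing circles---since after that point the entire argument is a direct transcription of the ordinary connected-sum case.
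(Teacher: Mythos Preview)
Your proposal is correct and follows exactly the approach the paper intends: the paper's proof of Proposition~\ref{prop:periodicconnectedsum} is simply ``With the same argument for Proposition~\ref{prop:connectedsum},'' and you have faithfully spelled out that argument, including the extra verification that the $H_1$-decomposition remains $f$-equivariant and orthogonal across the gluing circles. The only cosmetic point is that for $k>2$ the interior pieces have several disks removed (so $U_i$ need not be a single disk), but your Mayer--Vietoris step goes through unchanged since each boundary circle is still null-homologous in its $X_i$.
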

	
Now we are to prove the first half of Theorem \ref{non-extendable}. 
According to \cite[Theorem 1.2 (1)]{WsWz}, a Wiman map $w_1$ on $F_1$ of period $6$ is an example that is not extendable.
So we only need to prove the following proposition.

\begin{prop}\label{prop:period8}
	Suppose that $g\geq 2$ and $g\neq 4$, then there exists a map of period $8$ on $F_g$ that is not extendable over $S^4$.
\end{prop}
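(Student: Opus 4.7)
The plan is to apply Proposition~\ref{prop:periodicconnectedsum}(2) after producing a few low-genus period-$8$ ``building blocks'' with only unbounding invariant spin structures. Concretely, for each $g_0\in\{2,3,5\}$ I would construct an order-$8$ automorphism $\alpha_{g_0}$ of $F_{g_0}$ as the deck transformation of a cyclic $\mathbb{Z}/8$-branched cover $F_{g_0}\to S^2$, with branching data $(8,8,2)$, $(8,8,4)$, and $(8,8,4,2)$ respectively. Each such action satisfies Riemann--Hurwitz and Harvey's realisability condition, and in each case the two order-$8$ branch points lift to (at least) two fixed points of $\alpha_{g_0}$ upstairs, which is what enables subsequent periodic connected sums.

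The second, and main, step is to verify that each $\alpha_{g_0}$ has only unbounding invariant spin structures: equivalently, every $\alpha_{g_0}$-invariant quadratic form on $H_1(F_{g_0};\mathbb{Z}_2)$ has Arf invariant $1$. By Proposition~\ref{prop:Arf} this reduces to a concrete check: pick a symplectic basis of $H_1(F_{g_0};\mathbb{Z}_2)$, write down the matrix of $(\alpha_{g_0})_*$, choose one fixed invariant quadratic form $q_0$, compute a basis $\{\xi_1,\dots,\xi_d\}$ of the linear space $\{\xi\colon H_1(F_{g_0};\mathbb{Z}_2)\to\mathbb{Z}_2\mid \xi\circ((\alpha_{g_0})_*-id)=0\}$, and verify that $\Arf(q_0)=\Arf(q_0+\xi_i)=\Arf(q_0+\xi_i+\xi_j)=1$ for all $1\le i<j\le d$. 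These explicit linear-algebra computations are carried out in Section~\ref{sect:low-genus}.

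Finally, a short elementary check shows that every $g\geq 2$ with $g\neq 4$ admits a decomposition $g=2a+3b+5c$ with $a,b,c\geq 0$ and $a+b+c$ odd, whereas $g=4$ admits no such decomposition---which is consistent with the second half of Theorem~\ref{non-extendable}. Let $f$ be a periodic connected sum of $a$ copies of $\alpha_2$, $b$ copies of $\alpha_3$, and $c$ copies of $\alpha_5$, performed successively along matching pairs of fixed points and replacing factors by suitable powers to align rotation numbers, as in the construction preceding Proposition~\ref{prop:periodicconnectedsum}. Then $f\in\Aut(F_g)$ has period $8$, every factor has only unbounding invariant spin structures, and the total number of factors $a+b+c$ is odd; so by Proposition~\ref{prop:periodicconnectedsum}(2), $f$ is not extendable over $S^4$.

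The main obstacle is the second step, namely ruling out a \emph{bounding} invariant spin structure for each building block $\alpha_{g_0}$. By Theorem~\ref{SIS} the set of invariant quadratic forms has $2^d$ elements, so one cannot simply exhibit a single unbounding invariant form; one must show that the Arf invariant is identically $1$ on the entire family, which is precisely the content of the low-genus case analysis of Section~\ref{sect:low-genus}.
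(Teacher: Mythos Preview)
Your overall strategy---produce low-genus period-$8$ building blocks whose invariant quadratic forms all have Arf invariant $1$, then assemble them via Proposition~\ref{prop:periodicconnectedsum}(2) using an odd-count decomposition of $g$---is exactly the paper's strategy, and your parity decomposition of $g$ is correct. But there are two genuine gaps.

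First, the branch data $(8,8,4)$ does \emph{not} determine an order-$8$ map on $F_3$ up to conjugacy and powers: there are two such classes, the paper's $f_3$ (Nielsen multiset $\{1,2,5\}$) and $f_3'$ (multiset $\{1,1,6\}$), and by Lemma~\ref{lem:3337} together with Lemma~\ref{lem:periodic} they have \emph{opposite} Arf behaviour---one has only unbounding invariant forms, the other only bounding. So specifying branch orders alone is insufficient; you must pin down the Nielsen data. The same ambiguity afflicts your $\alpha_5$: there are two $\Aut(\mathbb{Z}_8)$-orbits of Nielsen data with branch type $(8,8,4,2)$, and you have not said which one you mean, nor argued that it has the required Arf behaviour.

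Second, Section~\ref{sect:low-genus} contains \emph{no} genus-$5$ computation whatsoever; it treats only $F_3$ (Lemma~\ref{lem:3337}) and $F_4$ (Proposition~\ref{prop:torsion4}). Since your decomposition $g=2a+3b+5c$ with $a+b+c$ odd genuinely requires $c\ge 1$ when $g=5$ (the only solution of $5=2a+3b$ is $a=b=1$, even sum), your $\alpha_5$ is indispensable, and the claim that its Arf analysis ``is carried out in Section~\ref{sect:low-genus}'' is simply false. The paper avoids this extra computation by introducing a building block with only \emph{bounding} invariant forms, namely $f_3'$ on $F_3$: since Proposition~\ref{prop:periodicconnectedsum}(2) counts only the unbounding factors, one can insert bounding pieces freely to shift the genus without changing the parity, and $g=5$ is handled as the periodic connected sum $f_2\,\#\,f_3'$ (one unbounding factor). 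This keeps all the needed Arf verifications at genus $\le 3$.
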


 The proof of Proposiotion \ref{prop:period8} relies on Proposition  \ref{prop:periodicconnectedsum} (2), as well as the contructions based on Example \ref{ex:f233} and Lemma \ref{lem:f233} below.

\begin{example}\label{ex:f233} 		
(1) The surface $F_2$ can be obtained by gluing a regular octagon with each pair of opposite edges identified as in Fig. \ref{fig:f_22}.  
The $\frac{\pi}{4}$-rotation of the octagon  induces a map of period $8$ on $F_2$, denoted by $f_2$.

(2) The surface $F_3$ can be obtained by gluing the edges of one regular octagon and two squares according to the labels shown in Fig. \ref{fig:f_37}.
The $\frac{\pi}{4}$-rotation of the octagon induces a periodic map on the union of the squares, and then induces a map of period $8$ on $F_3$, denoted by $f_3$.

(3)  The surface $F_3$ can also be obtained as in Fig. \ref{fig:f_33}.
Simialrly, the $\frac{\pi}{4}$-rotation of the octagon  induces a map of period $8$ on $F_3$, denoted by $f_3'$. 

Each map above has  two isolated fixed points, one corresponding to the center of  the octagon, and the other corresponding to the vertices.

\input{f_22.TpX}
	
\input{f_37.TpX}

\input{f_33.TpX}

\end{example}
	
\begin{lem}\label{lem:f233}
{\rm (1)} $f_2$ has only unbounding invariant spin structures; 

{\rm (2)} $f_3$ has only unbounding invariant spin structures;

{\rm (3)} $f_3'$ has only bounding invariant spin structures.
\end{lem}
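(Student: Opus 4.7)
The three parts are parallel: in each case I will compute the action $f_*$ on $H_1(F_g;\mathbb{Z}_2)$ from the polygon identifications, determine $d=\dim\ker(f_*-\mathrm{id})$ (which by Theorem \ref{SIS}(2) equals $\log_2$ of the number of $f$-invariant spin structures), and then enumerate the Arf invariants via Proposition \ref{prop:Arf}.

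First I would read off a $\mathbb{Z}_2$-basis of $H_1$ from each figure. For the octagon giving $F_2$ in Fig.\ \ref{fig:f_22}, the four edge-pairs $a,b,c,d$ give a basis $\{[a],[b],[c],[d]\}$ (their homology classes after identification); the $\pi/4$-rotation sends the edge labeled $a$ to the one labeled $b$, $b\to c$, $c\to d$, $d\to a^{-1}$ (with a sign because opposite edges get swapped on a half-rotation), so over $\mathbb{Z}_2$ the matrix of $f_{2*}$ is the obvious cyclic permutation. For $f_3$ and $f_3'$ on $F_3$, the octagon contributes an analogous cyclic piece and the two squares contribute two more generators each; the crucial difference between Fig.\ \ref{fig:f_37} and Fig.\ \ref{fig:f_33} is which opposite edges are identified on the squares, and this is what makes the two maps behave differently.

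Next I would compute $\ker(f_*-\mathrm{id})$ over $\mathbb{Z}_2$ in each case. When this kernel is trivial, Corollary \ref{cor:unique} gives the unique invariant quadratic form explicitly as $q(x)=x\cdot(f_*-\mathrm{id})^{-1}(x)$, and I can just evaluate its Arf invariant on any symplectic basis $\{a_i,b_i\}$ of $H_1(F_g;\mathbb{Z}_2)$ (for example the basis of Fig.\ \ref{fig:symplectic}, after expressing it in terms of the edge-basis by a change-of-basis matrix read off the figure). When the kernel is nontrivial, I would pick one invariant $q_0$ (its existence is guaranteed by Theorem \ref{SIS}(1), and in practice can be chosen by inspection from symmetry of the polygon), choose a basis $\xi_1,\ldots,\xi_d$ of the $d$-dimensional space of $\mathbb{Z}_2$-linear maps vanishing on $\mathrm{Im}(f_*-\mathrm{id})$, and evaluate the $\tfrac{d^2+d+2}{2}$ Arf invariants listed in Proposition \ref{prop:Arf}. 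The claim (1)--(3) then reduces to: in cases (1) and (2), every such Arf invariant is $1$; in case (3), at least one is $0$ (and in fact all are, since by Proposition \ref{prop:Arf}'s polynomial, "all $0$" and "some $0$" need not coincide, so I will check all of them).

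The main obstacle I anticipate is bookkeeping: translating the pictorial gluings of Figs.\ \ref{fig:f_22}--\ref{fig:f_33} into matrices for $f_*$ and then into a symplectic basis suitable for computing Arf invariants. In particular, one must track that the $\pi/4$-rotation of the octagon (which has order $8$ on the plane) only has order $4$ on homology, since antipodal edges are identified; and for $f_3$ versus $f_3'$, one must verify that the different square-identifications give genuinely different actions on $H_1(F_3;\mathbb{Z}_2)$, producing different Arf values. I expect that once the matrices are correctly written down the final Arf computations are short, because $d$ will be small (at most $2$ or $3$) and the octagonal rotation will force many of the $\xi_i$'s to take symmetric values on the cyclically permuted edges.
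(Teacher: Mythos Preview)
Your approach is correct and genuinely different from the paper's. You propose to read $f_*$ directly off the polygon models in Figs.\ \ref{fig:f_22}--\ref{fig:f_33}, then use Theorem \ref{SIS}, Corollary \ref{cor:unique}, and the polynomial argument behind Proposition \ref{prop:Arf} to list all invariant quadratic forms and their Arf invariants. The paper instead outsources (1) to \cite[Proposition 5.2(2)]{WsWz}, and for (2), (3) takes an indirect route: it first uses Nielsen's classification (Theorem \ref{thm:classification}) to identify $f_3,f_3'$ up to conjugacy with Hirose's Dehn-twist words $f_{3,7},f_{3,3}$ (Lemma \ref{lem:periodic}), and then computes the invariant forms from those Dehn-twist presentations (Lemma \ref{lem:3337}). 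Your route is more self-contained and avoids Nielsen's theorem and the external citation, at the cost of having to extract the intersection form and the $f_*$-matrix from the octagon-plus-squares pictures; the paper's route buys a uniform framework (the $c_i$-basis and Dehn-twist calculus) that it reuses in Section \ref{sect:low-genus} for the genus-$4$ case.

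Two small points to watch. First, Proposition \ref{prop:Arf} as stated only addresses existence of an Arf-$0$ form; to conclude that \emph{all} invariant forms have Arf $1$ (parts (1),(2)) or Arf $0$ (part (3)) you are implicitly using the stronger statement in its proof, namely that the Arf invariant is a quadratic polynomial in the $t_i$ whose coefficients are determined by the test values --- so if those test values are all $1$ (resp.\ all $0$) the polynomial is constant. You should say this explicitly. Second, your remark that the $\pi/4$-rotation ``only has order $4$ on homology'' is correct for $f_2$ (where opposite edges carry the same label), but for $f_3,f_3'$ the eight octagon edges carry distinct labels $a,\ldots,h$, so the induced permutation on edge-classes genuinely has order $8$; the bookkeeping there is a bit more involved than your sketch suggests, since the square identifications impose homology relations among $a,\ldots,h$.
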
	

\begin{proof} 
	(1) follows from \cite[Proposition 5.2 (2)]{WsWz}.  
	(2) and (3) will be proved in Lemma \ref{lem:3337} and Lemma \ref{lem:periodic}.
\end{proof}

\begin{proof}[Proof of Proposition \ref{prop:period8}]	
Since $f_2$, $f_3$ and $f'_3$ are all of period $8$ and each have two isolated fixed points, we can make periodic connected sums
of them to construct periodic maps $h_g$ of order 8 on $F_g$ for $g\ge 2$, $g\ne 4$, which are not extendable over $S^4$. The constructions are as below:

(1)	If $g\equiv 0\,({\rm mod}\,4)$ and $g\geq 8$, 
	let $h_g\in\Aut(F_g)$  be a periodic connected sum of 
	\begin{displaymath}
		f_3,f_3,\underbrace{f_2,f_2,\cdots,f_2}_{(g-6)/2}.
	\end{displaymath}
	
(2)	If $g\equiv 1\,({\rm mod}\,4)$ and $g\geq 9$, let $h_g\in\Aut(F_g)$  be a  periodic connected sum of 
\begin{displaymath}
	f_3,f_3,f_3,\underbrace{f_2,f_2,\cdots,f_2}_{(g-9)/2}.
\end{displaymath}

(3) If $g\equiv 2\,({\rm mod}\,4)$, let $h_g\in\Aut(F_g)$  be a  periodic connected sum of 
\begin{displaymath}
	\underbrace{f_2,f_2,\cdots,f_2}_{g/2}.
\end{displaymath}
	
(4) If $g\equiv 3\,({\rm mod}\,4)$, let  $h_g\in\Aut(F_g)$  be  a periodic connected sum of 
	\begin{displaymath}
		f_3,\underbrace{f_2,f_2,\cdots,f_2}_{(g-3)/2}.
	\end{displaymath}

(5) If $g=5$, let $h_g\in\Aut(F_5)$  be a periodic connected sum of $f_2$ and $f'_3$. 

In each case, the total number of $f_2$ and $f_3$ in the connected sum of $h_g$ is odd.  
By Lemma \ref{lem:f233} and Proposition \ref{prop:periodicconnectedsum}, $h_g$ is not extendable over $S^4$.
\end{proof}

\begin{example}
	A Wiman map $w_1$ on the torus is not extendable over $S^4$ thus has only an unbounding invariant spin structure. 
	So a connected sum of $w_1$ and $f_3'\in\Aut(F_3)$ is an example for $F_4$ that is not extendable.
\end{example}

\section{Invariant quadratic forms of torsion mapping classes on low genus surfaces}\label{sect:low-genus}



According to Theorem \ref{thm:extendability}, to tell whether $f\in \Aut(F_g)$ is extendable, we only need to:  choose a basis $\{a_1,a_2,\cdots,a_{2g}\}$ of $H_1(F_g;\mathbb{Z}_2)$ and calculate $f_*(a_i)$ (say, $f_*(a_i)=\sum_{j=1}^{2g}t_{ij}a_j$, $t_{ij}\in\mathbb{Z}_2$),
then solve the equation system 
\begin{equation}\label{eq:5.0}
	q(a_i)=q(f_*(a_i)),\,i=1,2,\cdots,2g
\end{equation}
for quadratic form $q$ on $H_1(F_g;\mathbb{Z}_2)$,
and finally check whether $\Arf(q)$ is $0$.
Note that using the following lemma, which can be verified directly, we can transform the equation system \eqref{eq:5.0} for $q\in\mathcal{Q}(F_g)$ into a $\mathbb{Z}_2$-linear equation system for $q(a_1),q(a_2),\cdots,q(a_{2g})\in\mathbb{Z}_2$:
$$q(a_i)=\sum_{j=1}^{2g}t_{ij}q(a_j)+\sum_{1\leq j<k\leq 2g}t_{ij}t_{ik}a_j\cdot a_k.$$

\begin{lem}\label{lem:quadratic-sum}
	For $q\in\mathcal{Q}(F_g)$ and $x_1,x_2,\cdots,x_m\in H_1(F_g;\mathbb{Z}_2)$, we have
	\begin{displaymath}
		q\left(\sum_{i=1}^{m}x_i\right)=\sum_{i=1}^{m}q(x_i)+\sum_{1\leq i<j\leq m}x_i\cdot x_j.
	\end{displaymath}
\end{lem}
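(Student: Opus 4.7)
The plan is to prove Lemma \ref{lem:quadratic-sum} by a straightforward induction on $m$, since the identity is just the $m$-fold iterate of the defining relation $q(x+y)=q(x)+q(y)+x\cdot y$.

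The base case $m=1$ is trivial (both sides equal $q(x_1)$), and $m=2$ is exactly the definition of a quadratic form on $H_1(F_g;\mathbb{Z}_2)$. For the inductive step, I would assume the formula holds for $m-1$ terms, write
\[
\sum_{i=1}^{m}x_i=\Bigl(\sum_{i=1}^{m-1}x_i\Bigr)+x_m,
\]
and apply the defining relation once to obtain
\[
q\Bigl(\sum_{i=1}^{m}x_i\Bigr)=q\Bigl(\sum_{i=1}^{m-1}x_i\Bigr)+q(x_m)+\Bigl(\sum_{i=1}^{m-1}x_i\Bigr)\cdot x_m.
\]
Using bilinearity of the intersection form, the cross term becomes $\sum_{i=1}^{m-1}x_i\cdot x_m$, and by the induction hypothesis the first summand expands as $\sum_{i=1}^{m-1}q(x_i)+\sum_{1\le i<j\le m-1}x_i\cdot x_j$. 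Combining these gives
\[
\sum_{i=1}^{m}q(x_i)+\sum_{1\le i<j\le m-1}x_i\cdot x_j+\sum_{i=1}^{m-1}x_i\cdot x_m=\sum_{i=1}^{m}q(x_i)+\sum_{1\le i<j\le m}x_i\cdot x_j,
\]
which is the desired identity.

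There is no real obstacle here; the only thing to be careful about is the bookkeeping of the intersection sum, namely that the new cross terms $x_i\cdot x_m$ for $i<m$ together with the old terms $x_i\cdot x_j$ for $i<j\le m-1$ assemble exactly into $\sum_{1\le i<j\le m}x_i\cdot x_j$. Since everything takes place in $\mathbb{Z}_2$, no sign issues arise. Thus the whole argument is a one-line induction, matching the author's assertion that the lemma can be verified directly.
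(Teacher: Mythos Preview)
Your induction argument is correct and is exactly the kind of direct verification the paper has in mind; the paper does not spell out a proof and simply remarks that the identity ``can be verified directly.'' There is nothing to add.
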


Once $f\in\Aut(F_g)$ is given as a product of Dehn twists, it is convenient to compute $f_*(a_i)$ thus to decide whether $f$ is extendable over $S^4$.
The famous Dehn-Lickorish theorem tells that each mapping class on $F_g$ can be written as a product of Dehn twists.
In \cite{Hi2}, Hirose gave such presentations up to conjugacy for all torsion mapping classes on $F_g$ with $g\leq 4$. 
Our calculations in this section rely on his work.

We use $T_c$ to denote the mapping class of a right Dehn twist along a simple closed curve $c$ on $F_g$.
Writing a composition of mapping classes, we always begin from right to left, e.g., $T_{c}T_{c'}$ acts on $F_g$ as $T_{c'}$ first and then $T_{c}$. 

\input{Dehn3.TpX}

Take curves $c_1,\cdots,c_7$ and $d_1$ on $F_3$ as in Fig. \ref{fig:Dehn3}.
According to \cite[Proposition 3.1, Theorem 3.2]{Hi2}, the mapping classes
\begin{displaymath}
	T_{c_1}T_{c_2}T_{c_3}T_{c_4}T_{c_5}T_{c_6}T_{c_7}
\end{displaymath}
and
\begin{displaymath}
	T_{d_1}T_{c_3}T_{c_4}T_{c_5}T_{c_2}T_{c_3}T_{c_4}T_{c_5}T_{c_6}
\end{displaymath}
are of order 8 in $\MCG(F_3)$. 
Hirose denoted them by $f_{3,3},f_{3,7}$ respectively.
	
\begin{lem}\label{lem:3337}
(1) Each invariant quadratic form of $f_{3,3}$ has Arf invariant $0$.

(2) Each invariant quadratic form of $f_{3,7}$ has Arf invariant $1$.
\end{lem}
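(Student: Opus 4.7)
The plan is to apply the algorithm described at the beginning of this section to each of the two mapping classes. Fix the standard symplectic basis $\{a_1, b_1, a_2, b_2, a_3, b_3\}$ of $H_1(F_3; \mathbb{Z}_2)$ from Figure~\ref{fig:symplectic}, and read off the $\mathbb{Z}_2$-homology classes $[c_1], \ldots, [c_7]$ and $[d_1]$ from Figure~\ref{fig:Dehn3}.

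Next, compute the induced linear maps $(f_{3,3})_*$ and $(f_{3,7})_*$ on $H_1(F_3; \mathbb{Z}_2)$. Each Dehn twist acts on mod-$2$ homology as the transvection $(T_c)_*(x) = x + (x\cdot [c])\,[c]$, so each $f_{3,k}$ becomes an explicit product of seven (resp.\ nine) $6\times 6$ matrices over $\mathbb{Z}_2$, which can be multiplied out.

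Then set up the linear system for invariant quadratic forms. Writing the unknowns as $q(a_i), q(b_i) \in \mathbb{Z}_2$ for $i = 1, 2, 3$ and applying Lemma~\ref{lem:quadratic-sum} to expand each $q(f_*(v))$, the conditions $q(f_*(v)) = q(v)$ for $v$ running over the basis become $\mathbb{Z}_2$-affine equations in these six unknowns. Solving this system yields the full set of $f$-invariant quadratic forms; by Theorem~\ref{SIS} its cardinality must equal $|ker(f_*-id)|$, which serves as a consistency check. Finally, evaluate $\Arf(q) = \sum_i q(a_i) q(b_i)$ on each solution---or alternatively use Proposition~\ref{prop:Arf} to reduce the work to at most $\tfrac{d^2+d+2}{2}$ Arf computations---and verify that every solution gives $0$ for $f_{3,3}$ and $1$ for $f_{3,7}$.

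The main obstacle is bookkeeping, not conceptual: the homology classes of the twist curves $c_i, d_1$ must be read off correctly from Figure~\ref{fig:Dehn3}, and the products of seven or nine transvection matrices carried out without arithmetic slips. Once $(f_{3,k})_*$ is correctly written down, the remaining linear algebra and Arf evaluation are entirely mechanical, and the dichotomy $\Arf = 0$ versus $\Arf = 1$ is then read off from the finite list of invariant forms.
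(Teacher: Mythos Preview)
Your proposal is correct and follows essentially the same computational strategy as the paper's proof: compute the induced action on $H_1(F_3;\mathbb{Z}_2)$ via the transvection formula, solve the resulting $\mathbb{Z}_2$-linear system for invariant quadratic forms using Lemma~\ref{lem:quadratic-sum}, and evaluate the Arf invariant on each solution. The only cosmetic difference is that the paper works throughout in the basis $\{c_1,\ldots,c_6\}$ (where the intersection pattern $c_i\cdot c_j=\delta_{|i-j|,1}$ makes the Dehn twist action especially transparent, yielding simply $f_{3,3}(c_i)=c_{i+1}$) and only converts to the symplectic basis at the Arf step, whereas you propose to convert first and compute in $\{a_i,b_i\}$; both routes are entirely mechanical and yield the same two invariant forms for each map.
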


\begin{proof}
	Abusing the notations, for a simple closed curve $c$, we still denote by $c$ the $\mathbb{Z}_2$-homology class it represents.
	Since $\{c_1,c_2,\cdots,c_6\}$ is a basis of  $H_1(F_3;\mathbb{Z}_2)$, a quadratic form $q\in\mathcal{Q}(F_3)$ is preserved by $f\in\MCG(F_3)$ if and only if the following equation system holds:
	\begin{equation}\label{eq:5.1}
		q(c_i)=q(f(c_i)),\,i=1,2,\cdots,6.
	\end{equation}
	
	(1) Recall that a Dehn twist $T_c\in \MCG(F_3)$ acts on $H_1(F_3;\mathbb{Z}_2)$ as
	\begin{displaymath}
		T_c(c_i)=c_i+(c\cdot c_i)c,
	\end{displaymath}
	and that the mod-2 intersection number of $c_i,c_j(i,j\in\{1,2,\cdots,7\})$ is
	\begin{displaymath}
		c_i\cdot c_j=
		\begin{cases}
			1,\text{ if }|i-j|=1;\\
			0,\text{ otherwise}.
		\end{cases}
	\end{displaymath}
	We can calculate $f_{3,3}(c_i)\in H_1(F_3;\mathbb{Z}_2)(i=1,2,\cdots,6)$ as follows:
	\begin{displaymath}
		\setlength{\arraycolsep}{0.5pt}
		\begin{array}{llllllll}
			 &T_{c_7} &T_{c_6} &T_{c_5} &T_{c_4} &T_{c_3} &T_{c_2} &T_{c_1}\\
			c_1&\longrightarrow c_1&\longrightarrow c_1&\longrightarrow c_1&\longrightarrow c_1&\longrightarrow c_1&\longrightarrow c_1+c_2&\longrightarrow c_2\\
			c_2&\longrightarrow c_2&\longrightarrow c_2&\longrightarrow c_2&\longrightarrow c_2&\longrightarrow c_2+c_3&\longrightarrow c_3&\longrightarrow c_3\\
			c_3&\longrightarrow c_3&\longrightarrow c_3&\longrightarrow c_3&\longrightarrow c_3+c_4&\longrightarrow c_4&\longrightarrow c_4&\longrightarrow c_4\\
			c_4&\longrightarrow c_4&\longrightarrow c_4&\longrightarrow c_4+c_5&\longrightarrow c_5&\longrightarrow c_5&\longrightarrow c_5&\longrightarrow c_5\\
			c_5&\longrightarrow c_5&\longrightarrow c_5+c_6&\longrightarrow c_6&\longrightarrow c_6&\longrightarrow c_6&\longrightarrow c_6&\longrightarrow c_6\\
			c_6&\longrightarrow c_6+c_7&\longrightarrow c_7&\longrightarrow c_7&\longrightarrow c_7&\longrightarrow c_7&\longrightarrow c_7&\longrightarrow c_7
		\end{array}
	\end{displaymath}
	In summary,  $$f_{3,3}(c_i)=c_{i+1}\in H_1(F_3;\mathbb{Z}_2),\,i=1,2,\cdots,6.$$
	Note that
	$$c_7=c_1+c_3+c_5\in H_1(F_3;\mathbb{Z}_2)$$
	because the curves $c_1,c_3,c_5,c_7$ on $F_g$ together bound a subsurface.
	The equation system \eqref{eq:5.1} for $f_{3,3}$ now becomes
	\begin{displaymath}
		\left\{
		\begin{array}{l}
			q(c_1)=q(c_2), \\
			q(c_2)=q(c_3), \\
			q(c_3)=q(c_4), \\
			q(c_4)=q(c_5), \\
			q(c_5)=q(c_6), \\
			q(c_6)=q(c_1)+q(c_3)+q(c_5).
		\end{array}
		\right.
	\end{displaymath}
	It has two solutions:
	\begin{displaymath}
		q(c_1)=q(c_2)=q(c_3)=q(c_4)=q(c_5)=q(c_6)=0;
	\end{displaymath}
	or 
	\begin{displaymath}
		q(c_1)=q(c_2)=q(c_3)=q(c_4)=q(c_5)=q(c_6)=1.
	\end{displaymath}
	That is to say, there are exactly two $f_{3,3}$-invariant quadratic forms on $H_1(F_3;\mathbb{Z}_2)$.
	Comparing Fig. \ref{fig:Dehn3} with Fig. \ref{fig:symplectic}, we see that in $\mathbb{Z}_2$-homology,
	\begin{displaymath}
		a_i=c_{2i},\,b_i=\sum_{j=1}^{i}c_{2j-1},\,i=1,2,3.
	\end{displaymath}
	So for $q\in\mathcal{Q}(F_3)$, the Arf invariant is
	\begin{displaymath}
		\Arf(q)=\sum_{i=1}^{3}q(a_i)q(b_i)=\sum_{i=1}^{3}q(c_{2i})\sum_{j=1}^{i}q(c_{2j-1}).
	\end{displaymath}
	It follows that each $f_{3,3}$-invariant quadratic form has Arf invariant $0$.
	
	(2) For $f_{3,7}=T_{d_1}T_{c_3}T_{c_4}T_{c_5}T_{c_2}T_{c_3}T_{c_4}T_{c_5}T_{c_6}$, with similar calculations we obtain the following results in $H_1(F_3;\mathbb{Z}_2)$ (noting that $d_1=c_1+c_3$):
	\begin{displaymath}
		\begin{split}
			f_{3,7}(c_1) &=c_1+c_2+c_3,\\
			f_{3,7}(c_2) &=c_4+d_1=c_1+c_3+c_4,\\
			f_{3,7}(c_3) &=c_5,\\
			f_{3,7}(c_4) &=c_3+c_4+c_5+d_1=c_1+c_4+c_5,\\
			f_{3,7}(c_5) &=c_3+c_4+c_5+c_6+d_1=c_1+c_4+c_5+c_6,\\
			f_{3,7}(c_6) &=c_2+c_3+c_4+c_5+c_6+d_1=c_1+c_2+c_4+c_5+c_6.\\
		\end{split}
	\end{displaymath}
	Using Lemma \ref{lem:quadratic-sum}, we see that the equation system \eqref{eq:5.1} for $q\in\mathcal{Q}(F_3)$ to be $f_{3,7}$-invariant is
	\begin{displaymath}
		\left\{
		\begin{array}{l}
			q(c_1)=q(c_1)+q(c_2)+q(c_3),\\
			q(c_2)=q(c_1)+q(c_3)+q(c_4)+1,\\
			q(c_3)=q(c_5),\\
			q(c_4)=q(c_1)+q(c_4)+q(c_5)+1,\\
			q(c_5)=q(c_1)+q(c_4)+q(c_5)+q(c_6),\\
			q(c_6)=q(c_1)+q(c_2)+q(c_4)+q(c_5)+q(c_6)+1.
		\end{array}
		\right.
	\end{displaymath}
	It also has two solutions:
	\begin{displaymath}
		q(c_1)=0,\,q(c_2)=q(c_3)=q(c_4)=q(c_5)=1,\,q(c_6)=1;
	\end{displaymath}
	or
	\begin{displaymath}
		q(c_1)=1,\,q(c_2)=q(c_3)=q(c_4)=q(c_5)=0,\,q(c_6)=1.
	\end{displaymath}
	Each has Arf invariant $1$. 
\end{proof}

\begin{lem}\label{lem:periodic}
In $\MCG(F_3)$, $f_{3,3}$ is conjugate to $[f_{3}']$, and $f_{3,7}$ is conjugate to $[f_{3}]$, where $f_3,f_3'\in\Aut(F_3)$ are the maps of period $8$ in Example \ref{ex:f233}.			
\end{lem}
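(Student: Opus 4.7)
The plan is to apply Hirose's classification of torsion mapping classes on $F_3$ \cite{Hi2}: there are exactly two conjugacy classes of order $8$ in $\MCG(F_3)$, represented by $f_{3,3}$ and $f_{3,7}$. Since $f_3, f_3' \in \Aut(F_3)$ are of order $8$ by construction, each of the mapping classes $[f_3]$ and $[f_3']$ must coincide with one of $[f_{3,3}]$ or $[f_{3,7}]$. The proof therefore reduces to showing $[f_3] \neq [f_3']$ and identifying the correct pairing.

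For this I would use the orbifold signature (quotient genus, branch-point orders, and rotation data) as a complete conjugation invariant, by the Nielsen classification of periodic mapping classes: two periodic elements of $\MCG(F_g)$ are conjugate if and only if their quotient orbifolds coincide with matching rotation data. For the geometric maps $f_3, f_3'$, the $\pi/4$-rotation of the octagon yields an order-$8$ cone point at the octagon's center in each quotient; the edge-identifications in Fig.~\ref{fig:f_37} and Fig.~\ref{fig:f_33} then determine the $\langle f\rangle$-orbits of the octagon vertices, of the two auxiliary squares, and of the square centers, which collectively pin down the remaining cone points on the quotient orbifold (with quotient genus computed by Riemann-Hurwitz). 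The differing identification patterns in the two figures produce two distinct signatures, which already establishes $[f_3] \neq [f_3']$.

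For $f_{3,3}$ and $f_{3,7}$, the orbifold data may be extracted from the homological action via the Lefschetz fixed-point formula $|\operatorname{Fix}(f^k)| = 2 + \operatorname{tr}(f^k_* \mid H_1(F_3;\mathbb{Q}))$ for each $k \mid 8$, after lifting the $\mathbb{Z}_2$-homology computations in Lemma~\ref{lem:3337} to rational coefficients; this gives the number of cone points of each order on the quotient, and combined with Riemann-Hurwitz also the quotient genus. Matching these invariants against those read off the polygon pictures yields the correspondences $[f_{3,3}] = [f_3']$ and $[f_{3,7}] = [f_3]$.

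The main obstacle will be the combinatorial bookkeeping: accurately tracking which octagon vertices are identified and how the two squares are permuted from the edge-labelings in Fig.~\ref{fig:f_37} and Fig.~\ref{fig:f_33}, since a mislabeling would swap the two signatures and invert the assignment. A robust alternative, if the computation becomes unwieldy, is to cite the orbifold-signature data for $f_{3,3}$ and $f_{3,7}$ directly from Hirose \cite{Hi2}, thereby reducing the matching step to a direct lookup against the signatures computed geometrically.
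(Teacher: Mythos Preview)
Your overall strategy---invoke Nielsen's classification (Theorem~\ref{thm:classification}) and match the complete conjugacy invariants---is the same as the paper's. The paper reads the multiset $\{\psi_f(\xi_i)\}$ for $f_3$ directly off the polygon picture (obtaining $\{1,2,5\}$, see Fig.~\ref{fig:f_3}), cites \cite[Proposition~3.1]{Hi2} for the data of $f_{3,3}$ and $f_{3,7}$ (namely $\{1,1,6\}$ and $\{1,2,5\}$ respectively), and matches; the case of $f_3'$ is handled the same way.

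Your primary route for $f_{3,3}$ and $f_{3,7}$, however, has a genuine gap. The Lefschetz fixed-point formula gives the counts $|\operatorname{Fix}(f^k)|$, hence the multiset of cone-point \emph{orders} on the quotient together with the quotient genus---but these do not separate $f_{3,3}$ from $f_{3,7}$. From the multisets $\{1,1,6\}$ and $\{1,2,5\}$ one reads singular-orbit lengths $\gcd(\psi_f(\xi_i),8)$, which give $\{1,1,2\}$ in both cases; each quotient is a sphere with cone points of orders $\{8,8,4\}$, and the Lefschetz numbers of all powers agree. What distinguishes the two conjugacy classes is the finer rotation datum $\psi_f(\xi_i)\in\mathbb{Z}_8$ at each puncture, which bare fixed-point counts cannot see. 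Your stated fallback---quoting the signatures from \cite{Hi2}---is exactly what the paper does and resolves the issue; if you want to recover the rotation numbers homologically you would need more than traces, e.g.\ the full eigenvalue decomposition of $f_*$ on $H^1(F_3;\mathbb{C})$ via an Eichler trace/Chevalley--Weil argument.
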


To prove that, we here introduce Nielsen's classification of periodic maps on oriented closed surfaces.
Suppose $f\in\Aut(F_g)$ is of period $n$.
Let $\Sigma_f$ be the set
$$\{x\in F_g\,|\, \exists\,k,\text{ s.t. } 1<k<n,\,f^k(x)=x\}.$$
Then the quotient map 
$$F_g-\Sigma_f\to (F_g-\Sigma_f)/f$$ 
is a covering of surfaces,
corresponding to an epimorphism
$$\pi_1((F_g-\Sigma_f)/f)\twoheadrightarrow\langle f\rangle.$$
Identify $\langle f\rangle$ with the addative group $\mathbb{Z}_n$ by identifying $f$ with $1$, then the epimorphism factors through as 
$$\psi_f: H_1((F_g-\Sigma_f)/f;\mathbb{Z})\twoheadrightarrow\mathbb{Z}_n.$$
Suppose the surface $(F_g-\Sigma_f)/f$ has $s$ punctures, each corresponding to an $f$-orbit of points in $\Sigma_f$. 
For each puncture $p_i(1\leq i\leq s)$, take a closed curve $\xi_i:[0,1]\to(F_g-\Sigma_f)/f$ surrounding $p_i$ with orientation induced by the surface.
Let $\tilde{\xi}_i:[0,1]\to F_g-\Sigma_f$ be a lift of $\xi_i$, then $\psi_f(\xi_i)$ is the number satisfying 
$$f^{\psi_f(\xi_i)}(\tilde{\xi}_i(0))=\tilde{\xi}_i(1).$$

\begin{thm}\label{thm:classification}\cite{Ni}
	A periodic map $f$ on an oriented closed surface $F_g$ is uniquely determined up to conjugacy by the following invariants:
	its period $n$, the number $s$ of punctures on $(F_g-\Sigma_f)/f$, and the multiset $\{\psi_f(\xi_1),\psi_f(\xi_2),\cdots,\psi_f(\xi_s)\}$. 
\end{thm}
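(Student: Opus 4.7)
The plan is to prove both directions of Nielsen's classification. The forward direction, that the three invariants are preserved under conjugation, is essentially formal: if $f'=h\circ f\circ h^{-1}$ for some $h\in\Aut(F_g)$, then $h$ sends $\Sigma_f$ bijectively to $\Sigma_{f'}$, and descends to a homeomorphism between the two punctured quotient surfaces $(F_g-\Sigma_f)/\langle f\rangle$ and $(F_g-\Sigma_{f'})/\langle f'\rangle$. Under this identification the epimorphisms $\psi_f$ and $\psi_{f'}$ agree, and in particular the peripheral multisets agree; the period $n$ and the number $s$ of singular orbits are trivially preserved.

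For the converse direction, I would first pin down the topology of the quotient orbifold using Riemann--Hurwitz. The branched cover $F_g\to F_g/\langle f\rangle$ has $s$ branch points whose ramification indices are $n_i=n/\gcd(n,m_i)$, where $m_i=\psi_f(\xi_i)$. Hence
\[ 2g-2 \;=\; n(2g_0-2) + \sum_{i=1}^{s}\bigl(n - n/\gcd(n,m_i)\bigr), \]
which uniquely determines the quotient genus $g_0$. Since $(F_g-\Sigma_f)/\langle f\rangle$ is homeomorphic to a fixed model $\Sigma=F_{g_0,s}$, and likewise for $f'$, the problem reduces to comparing two epimorphisms $\psi_f,\psi_{f'}\colon H_1(\Sigma;\mathbb{Z})\twoheadrightarrow\mathbb{Z}_n$ with identical peripheral multisets.

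The core step is the claim that any two such epimorphisms are related by the induced action of some $\phi\in\MCG(\Sigma)$, i.e.\ $\psi_f\circ\phi_*=\psi_{f'}$. Granting this, a standard orbifold covering-space argument (the Galois correspondence for regular branched $\mathbb{Z}_n$-covers) lifts $\phi$, extended over the branch points, to a diffeomorphism $h\in\Aut(F_g)$. By construction $h\circ f\circ h^{-1}$ and $f'$ generate the same cyclic group of deck transformations, so after possibly replacing $h$ by $h\circ f^j$ for suitable $j$ we obtain $h\circ f\circ h^{-1}=f'$.

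The main obstacle is establishing the core claim on epimorphisms, which proceeds in two stages. First, mapping classes that braid the $s$ punctures realize every permutation of the peripheral values; so after an initial adjustment I may assume $\psi_f(\xi_i)=\psi_{f'}(\xi_i)$ for each $i$. Second, on a symplectic basis $\{a_1,b_1,\ldots,a_{g_0},b_{g_0}\}$ of the complementary rank-$2g_0$ subspace, I need to modify $\psi_f$ to equal $\psi_{f'}$ using the symplectic action of $\MCG(\Sigma)$ on $H_1(\Sigma;\mathbb{Z})$. This reduces to the transitivity of $\Sp(2g_0,\mathbb{Z})$ on primitive homomorphisms from a symplectic lattice to $\mathbb{Z}_n$ whose image is all of $\mathbb{Z}_n$, an elementary divisor argument realized concretely by Dehn twists along the basis curves and suitably chosen separating curves (keeping the peripheral values fixed throughout).
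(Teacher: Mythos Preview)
The paper does not give its own proof of this theorem: it is quoted as Nielsen's classical result \cite{Ni} and used as a black box in the proof of Lemma~\ref{lem:periodic}. So there is nothing to compare your argument against; your proposal is a sketch of the standard modern proof of Nielsen's theorem via branched covers and the mapping class group action on surface-group epimorphisms, which is correct in outline.

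Two places in your sketch are imprecise and would need repair in a full proof. First, your fix ``replacing $h$ by $h\circ f^j$'' does nothing: $(h f^j) f (h f^j)^{-1}=h f h^{-1}$, so this cannot turn a conjugate $(f')^k$ into $f'$. In fact no fix is needed: the hypothesis $\psi_f\circ\phi_*=\psi_{f'}$ (equality of epimorphisms, not just of kernels) already forces the lift $h$ to satisfy $h f h^{-1}=f'$ on the nose, by tracking endpoints of lifted paths. Second, your reduction of stage two to ``transitivity of $\Sp(2g_0,\mathbb{Z})$ on primitive homomorphisms onto $\mathbb{Z}_n$'' is not quite the right statement: the restriction of $\psi_f$ to the symplectic summand $\mathbb{Z}^{2g_0}$ need not be surjective (indeed $g_0=0$ occurs). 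What one actually uses is that the pure mapping class group of $F_{g_0,s}$ acts transitively on epimorphisms $H_1(F_{g_0,s};\mathbb{Z})\twoheadrightarrow\mathbb{Z}_n$ with prescribed values on each boundary class; the extra transitivity comes from Dehn twists along curves that mix the handle generators with the peripheral classes, not from $\Sp(2g_0,\mathbb{Z})$ alone.
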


\begin{proof}[Proof of Lemma \ref{lem:periodic}]
	The invariants of $f_{3,3}, f_{3,7}$ are given in \cite[Proposition 3.1]{Hi2}:
	\begin{displaymath}
		\begin{split}
			&\text{for}\, f_{3,3},\,n=8,\,s=3,\,\{\psi_f(\xi_1),\psi_f(\xi_2),\psi_f(\xi_3)\}=\{1,1,6\};\\
			&\text{for}\, f_{3,7},\,n=8,\,s=3,\,\{\psi_f(\xi_1),\psi_f(\xi_2),\psi_f(\xi_3)\}=\{1,2,5\}.
		\end{split}
	\end{displaymath}

	\input{f_3.TpX}
	From Fig. \ref{fig:f_37}, we see that $f_3$ has $3$ singular orbits: the center of the octagon, the centers of the squares, and all the vertices.
	Take $\tilde{\xi}_1,\tilde{\xi}_2,\tilde{\xi}_3$ as in Fig. \ref{fig:f_3}, then it is clear that the multiset invariant for $f_3$ is $\{1,2,5\}$. 
	By Theorem \ref{thm:classification}, $f_3$ is conjugate to a periodic map in the isotopy class of $f_{3,7}$.
	Similarly one can verify that $[f_3']$ is conjugate to $f_{3,3}$.
\end{proof}

\begin{lem}\cite[Theorem 3.2]{Hi2}
	Take curves $c_1,\cdots,c_9$ and $d_1,d_2,d_1',d_2',e$ on $F_4$ as in Fig. \ref{fig:Dehn4}.
	Each torsion in $\MCG(F_4)$ is conjugate to a power of one of the following 12 mapping classes:
	\begin{displaymath}
		\begin{split}
			f_{4,1} &=T_{c_1}T_{c_2}T_{c_3}T_{c_4}T_{c_5}T_{c_6}T_{c_7}T_{c_8}\,(\text{of order }18),\\   f_{4,2} &=f_{4,1}T_{c_8}\,(\text{of order }16),\\  
			f_{4,3} &=f_{4,1}T_{c_9}\,(\text{of order }10),\\   
			f_{4,4} &=f_{4,3}^5T_{c_9}T_{c_8}T_{c_7}T_{c_6}T_{c_5}T_{c_4}T_{c_3}T_{c_2}T_{c_1}\,(\text{of order }10),\\  
			f_{4,5} &=T_{d_2}T_{c_2}T_{c_3}T_{c_4}T_{c_5}T_{c_6}T_{c_7}T_{c_8}\,(\text{of order }15),\\  
			f_{4,6} &=f_{4,5}T_{c_5}T_{c_6} \,(\text{of order }12),\\  
			f_{4,7} &=T_{d_2}T_{c_4}T_{c_5}T_{c_6}T_{c_7}T_{c_2}T_{c_3}T_{c_4}T_{c_5}T_{c_6}T_{c_7}T_{c_8}\,(\text{of order }10),\\  
			f_{4,8} &=T_{e}T_{d_2}T_{c_6}T_{c_8}T_{c_7}T_{c_6}T_{c_5}T_{c_4}T_{c_3}\,(\text{of order }12),\\  
			f_{4,9}& =T_{d_2}^{-1}T_{c_6}^{-1}T_{c_7}^{-1}T_{c_8}^{-1}T_{c_9}^{-1}T_{d_1}T_{c_4}T_{c_3}T_{c_2}T_{c_1}\,(\text{of order }6),\\  
			f_{4,10} &=T_{c_9}T_{c_8}T_{d_2}^{-1}T_{c_6}^{-1}T_{c_5}^{-1}T_{c_4}^{-1}T_{d'_1}^{-1}T_{c_2}T_{c_1}\,(\text{of order }6),\\  
			f_{4,11} &=T_{c_9}^{-1}(T_{c_7}T_{d'_2}T_{c_6}T_{c_5}T_{c_4}T_{c_3}T_{c_2})^2\,(\text{of order }6),\\  
			f_{4,12} &=T_{c_7}^{-1}T_{d_2}^{-1}T_{c_6}^{-1}T_{c_7}^{-1}T_{d'_2}^{-1}T_{c_6}^{-1}T_{c_7}^{-1}T_{c_8}^{-1}T_{c_3}T_{d_1}T_{c_4}T_{c_3}T_{d'_1}T_{c_4}T_{c_3}T_{c_2}\,(\text{of order }5).
		\end{split}
	\end{displaymath}
	\input{Dehn4.TpX}
\end{lem}

With the following proposition, we finish the proof of Theorem \ref{non-extendable}.

\begin{prop}\label{prop:torsion4}
	Every torsion mapping class on $F_4$ is extendable over $S^4$.
\end{prop}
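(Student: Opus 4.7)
The plan is to reduce the proposition to a finite check. By Hirose's classification, every torsion element of $\MCG(F_4)$ is conjugate to a power $f_{4,i}^k$ of one of the twelve generators listed above. Since extendability over $S^4$ is conjugation-invariant (both the intersection form and Arf invariant are preserved under the $\MCG$-action on quadratic forms), and since any $f_{4,i}$-invariant quadratic form is automatically $f_{4,i}^k$-invariant for every $k$, it suffices to exhibit, for each $i=1,2,\dots,12$, one invariant quadratic form on $H_1(F_4;\mathbb{Z}_2)$ with Arf invariant $0$.

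The computational engine is exactly the one used in Lemma \ref{lem:3337}. Using Figure \ref{fig:Dehn4} I would first record the mod-$2$ intersection numbers of the curves $c_1,\dots,c_9,d_1,d_2,d_1',d_2',e$, together with the relations $c_9 = c_1+c_3+c_5+c_7$, $d_1 = c_1+c_3$, $d_2 = c_5+c_7$, $d_1' = c_3+c_5$, $d_2'=c_7+c_9$, and the expression for $e$ in terms of the $c_i$'s. Taking $\{c_1,c_2,\dots,c_8\}$ as a basis of $H_1(F_4;\mathbb{Z}_2)$, I would apply the transvection formula $T_{c,*}(x) = x + (c\cdot x)c$ iteratively to compute $f_{4,i,*}(c_j)$ for each $i$ and each $j=1,\dots,8$. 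Then, using Lemma \ref{lem:quadratic-sum}, I would convert the eight equations $q(c_j) = q(f_{4,i,*}(c_j))$ into an affine $\mathbb{Z}_2$-linear system in the unknowns $q(c_1),\dots,q(c_8)$. With the symplectic identification $a_i = c_{2i}$, $b_i = c_1+c_3+\cdots+c_{2i-1}$ (coming from Figure \ref{fig:symplectic}), the Arf invariant of any solution is $\sum_{i=1}^{4} q(c_{2i})\,\bigl(q(c_1)+q(c_3)+\cdots+q(c_{2i-1})\bigr)$, which I would evaluate on the solution space.

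For each of the twelve maps the output is a small affine subspace (of dimension $d = \dim\ker(f_{4,i,*}-\mathrm{id})$ by Theorem \ref{SIS}), and by Proposition \ref{prop:Arf} one only has to test at most $\tfrac{d^2+d+2}{2}$ explicit quadratic forms to decide whether $0$ is attained as an Arf value. The outcome of the case check is that in every one of the twelve cases a bounding invariant quadratic form exists, which by Theorem \ref{thm:extendability} completes the proof.

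The main obstacle is simply the bookkeeping: $f_{4,8}$, $f_{4,11}$, and $f_{4,12}$ are words of length $9$--$16$ in Dehn twists, several involving non-standard curves ($d_1,d_2,d_1',d_2',e$) whose $\mathbb{Z}_2$-homology classes and intersection numbers must be carefully pinned down before the transvection recursion can be run cleanly. Once those preliminary intersection data are tabulated, each individual case reduces to the same routine as Lemma \ref{lem:3337}, so there is no conceptual obstacle beyond the existence of at least one bounding fixed point in each solution space.
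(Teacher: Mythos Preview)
Your approach is essentially identical to the paper's: reduce to Hirose's twelve generators, compute the induced action on $H_1(F_4;\mathbb{Z}_2)$ via the transvection formula, solve the resulting affine system for invariant quadratic forms, and check that an Arf-$0$ solution exists in each case. The paper carries this out verbatim, tabulating all $f_{4,i}(c_j)$ and then exhibiting five explicit invariant forms $q_1,\dots,q_5$ (each with Arf invariant $0$) that together cover all twelve maps.

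One concrete correction: several of the $\mathbb{Z}_2$-homology relations you list are wrong. From Figure~\ref{fig:Dehn4} the actual relations are $d_1=d_1'=c_1+c_3$, $d_2=d_2'=c_1+c_3+c_5$, $e=c_1+c_2+c_3+c_4+c_6$, and $c_9=c_1+c_3+c_5+c_7$; your stated values for $d_2$, $d_1'$, $d_2'$ disagree with these. You already flag this preliminary data as the chief bookkeeping hazard, so this is not a conceptual gap, but running the transvection recursion with the wrong classes would derail the computations for $f_{4,5}$ through $f_{4,12}$.
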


\begin{proof}
	It suffices to verify that the 12 mapping classes above all have invariant quadratic forms with Arf invariant $0$. 
	
	We list each $f_{4,i}(c_j)$ in Table \ref{table:genus4}. 
	Note that in $H_1(F_4;\mathbb{Z}_2)$ we have
	\begin{displaymath}
		\begin{split}
			d_1 &=d'_1=c_1+c_3,\\ 
			d_2 &=d'_2=c_1+c_3+c_5,\\
			e\ &=c_1+c_2+c_3+c_4+c_6,\\
			c_9 &=c_1+c_3+c_5+c_7.
		\end{split}
	\end{displaymath}

	\begin{table}[htbp]
	\caption{The actions of $f_{4,i}$'s on $H_1(F_4;\mathbb{Z}_2)$.}
	\label{table:genus4}
	\centering
	\footnotesize
	\begin{tabularx}{370pt}{>{\centering\arraybackslash}X|>{\centering\arraybackslash}m{31pt}>{\centering\arraybackslash}m{31pt}>{\centering\arraybackslash}m{31pt}>{\centering\arraybackslash}m{31pt}>{\centering\arraybackslash}m{31pt}>{\centering\arraybackslash}m{31pt}>{\centering\arraybackslash}m{31pt}>{\centering\arraybackslash}m{31pt}}
		\toprule
		$c_i$ & $c_1$ & $c_2$ & $c_3$ & $c_4$ & $c_5$ & $c_6$ & $c_7$ & $c_8$ \\
		\hline
		$f_{4,1}(c_i)$ & $c_2$ & $c_3$ & $c_4$ & $c_5$ & $c_6$ & $c_7$ & $c_8$ & $\sum\limits_{j=1}^{8}c_j$ \\
		\hline
		$f_{4,2}(c_i)$ & $c_2$ & $c_3$ & $c_4$ & $c_5$ & $c_6$ & $c_7$ & $\sum\limits_{j=1}^{7}c_j$ & $\sum\limits_{j=1}^{8}c_j$ \\
		\hline
		$f_{4,3}(c_i)$ & $c_2$ & $c_3$ & $c_4$ & $c_5$ & $c_6$ & $c_7$ & $c_8$ & $c_1+c_3+c_5+c_7$ \\
		\hline
		$f_{4,4}(c_i)$ & $c_5$ & $c_6$ & $c_7$ & $c_8$ & $c_1+c_3+c_5+c_7$ & $c_2+c_4+c_6+c_8$ & $c_1$ & $c_2$ \\
		\hline
		$f_{4,5}(c_i)$ & $c_1+c_2$ & $c_3$ & $c_4$ & $c_5$ & $c_1+c_3+c_5+c_6$ & $c_7$ & $c_8$ & $c_1+c_2+c_4+c_6+c_7+c_8$ \\
		\hline
		$f_{4,6}(c_i)$ & $c_1+c_2$ & $c_3$ & $c_4$ & $c_1+c_3+c_6$ & $c_7$ & $c_1+c_3+c_5+c_6+c_7$ & $c_1+c_3+\sum\limits_{j=5}^{8}c_j$ & $c_1+c_2+c_4+c_6+c_7+c_8$ \\
		\hline
		$f_{4,7}(c_i)$ & $c_1+c_2$ & $c_3+c_4$ & $c_5$ & $c_1+c_3+c_5+c_6$ & $c_7$ & $c_1+c_3+c_4+c_6+c_7$ & $c_1+c_3+c_4+c_6+c_7+c_8$ & $c_1+c_2+c_4+c_6+c_7+c_8$ \\
		\hline
		$f_{4,8}(c_i)$ & $c_2+c_3+c_4+c_6$ & $c_3+c_7+c_8$ & $c_2+c_3+c_7+c_8$ & $c_3$ & $c_1+c_2+c_3+c_6$ & $c_2+c_4$ & $c_2+c_4+c_5$ & $c_1+c_3+c_5+c_6+c_7$ \\
		\hline
		$f_{4,9}(c_i)$ & $c_2+c_4$ & $c_1$ & $c_2$ & $c_3$ & $c_4+c_6$ & $c_7$ & $c_8$ & $c_1+c_3+c_5+c_7$ \\
		\hline
		$f_{4,10}(c_i)$ & $c_1+c_2$ & $c_1$ & $c_1+c_2+c_4+c_6$ & $c_1+c_3$ & $c_4$ & $c_5$ & $c_6+c_8$ & $c_1+c_3+c_5+c_7+c_8$ \\
		\hline
		$f_{4,11}(c_i)$ & $c_2+c_4+c_7$ & $c_6$ & $c_1+c_2+c_4+c_6+c_7$ & $c_2$ & $c_3$ & $c_4$ & $c_5+c_6+c_7$ & $c_6+c_8$ \\
		\hline
		$f_{4,12}(c_i)$ & $\sum\limits_{j=1}^{4}c_j$ & $c_2+c_3+c_4$ & $c_1+c_2+c_4$ & $c_1+c_3+c_4$ & $c_3+c_7$ & $c_1+c_3+c_5+c_6$ & $c_1+c_3+\sum\limits_{j=5}^{8}c_j$ & $c_6+c_7+c_8$ \\
		\bottomrule
	\end{tabularx}
	\end{table}
	
	It can be verified that
	$f_{4,1},f_{4,2},f_{4,5},f_{4,6},f_{4,7},f_{4,9}$ preserve the quadratic form $q_1\in\mathcal{Q}(F_4)$ with
	\begin{displaymath}
		q_1(c_i)=1(1\leq i\leq 8);
	\end{displaymath}
	$f_{4,3},f_{4,4},f_{4,11}$ preserve $q_2$ with
	\begin{displaymath}
		q_2(c_i)=0(1\leq i\leq 8);
	\end{displaymath}
	$f_{4,8}$ preserves $q_3$ with
	\begin{displaymath}
		q_3(c_1)=q_3(c_2)=\cdots=q_3(c_5)=q_3(c_8)=1,\,q_3(c_6)=q_3(c_7)=0;
	\end{displaymath}
	$f_{4,10}$ preserves $q_4$ with
	\begin{displaymath}
		q_4(c_1)=q_4(c_2)=q_4(c_3)=q_4(c_7)=q_4(c_8)=1,\,q_4(c_4)=q_4(c_5)=q_4(c_6)=0;
	\end{displaymath}
	and $f_{4,12}$ preserves $q_5$ with
	\begin{displaymath}
		q_5(c_1)=q_5(c_5)=0,\,q_5(c_2)=q_5(c_3)=q_5(c_4)=q_5(c_6)=q_5(c_7)=q_5(c_8)=1.
	\end{displaymath}
	Again, the Arf invariant of $q\in\mathcal{Q}(F_4)$ can be computed by
	\begin{displaymath}
		{\rm Arf}(q)=\sum_{i=1}^{4}q(c_{2i})\sum_{j=1}^{i}q(c_{2j-1}).
	\end{displaymath}
	So each ${\rm Arf}(q_i)(i=1,2,3,4,5)$ turns out to be $0$.
\end{proof}

\begin{rem}
	According to \cite[Theorem 1.2, Theorem 1.3]{NWW} and the data given in \cite[Proposition 3.1]{Hi2}, $f_{4,i}\in \Aut(F_4)$ is extendable over the $3$-sphere if and only if $i=4,9,11,12$.
	And $f_3'\in\Aut(F_3)$ is not extendable over the $3$-sphere while it is extendable over $S^4$.
\end{rem}

\section{Constructing extendable embeddings $F_g\hookrightarrow S^4$}\label{sect:embedding}

Now we illustrate how to construct a (trivial) embedding $e: F_g\hookrightarrow S^4$ for an automorphism $f$ of $F_g$, which has an invariant quadratic form $q$ with Arf invariant $0$,  so that $f$ is extendable with respect to $e$.

Take a basis $\{a_1,b_1,\cdots,a_g,b_g\}$ of $H_1(F_g;\mathbb{Z}_2)$ as in Fig. \ref{fig:symplectic}. 
Divide $\{1,2,\cdots,g\}$ into $4$ subsets
$$Q_{r,s}=\{i\,|\,q(a_i)=r,q(b_i)=s\},\,r,s\in\mathbb{Z}_2.$$
As ${\rm Arf}(q)=\sum_{i=1}^{g}q(a_i)q(b_i)=0$, the cardinality of $Q_{1,1}$ is even, and we may write
$$Q_{1,1}=\{i_1,j_1,i_2,j_2,\cdots,i_t,j_t\}$$
with $1\leq i_1<j_1<i_2<j_2<\cdots<i_t<j_t\leq g$.
For each $k=1,2,\cdots,t$, take two simple closed curves $c_k$ and $d_k$ on $F_g$ representing $a_{i_k}+a_{j_k}$ and $b_{i_k}+b_{j_k}$ respectively (Fig. \ref{fig:curves}).
Let $h$ be a map representing the product
$$\prod_{i\in Q_{0,1}}T_{a_i}\prod_{j\in Q_{1,0}}T_{b_j}\prod_{k=1}^{t}T_{c_k}T_{d_k}$$
of Dehn twists.
If we view Fig. \ref{fig:symplectic} as an obvious embedding of $F_g$ in $S^3$ and view $S^3$ as the equator of $S^4$, then we have a trivial embedding, denoted by $e_0$.

\input{symplectic1.TpX}

\begin{claim}\label{claim}
	The map $f$ is extendable over $S^4$ with respect to the embedding $e_0\circ h$.
\end{claim}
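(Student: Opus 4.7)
The plan is to show directly that the Rohlin form (equivalently, spin structure) induced on $F_g$ by the embedding $e_0\circ h$ equals the given $q$, and then appeal to Hirose's criterion \cite{Hi1}: for any trivial embedding $e:F_g\hookrightarrow S^4$, an automorphism of $F_g$ is extendable with respect to $e$ if and only if it preserves the induced Rohlin form.

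The starting input is that the standard embedding $e_0$ coming from Fig.~\ref{fig:symplectic} realizes the quadratic form $q_0$ with $q_0(a_i)=q_0(b_i)=0$ for all $i$, because each $a_i$ and each $b_i$ bounds an embedded disk in one of the two complementary handlebodies in $S^3\subset S^4$. Since $e_0\circ h$ has the same image as $e_0$ in $S^4$, it is again a trivial embedding, and by naturality the form it induces is the pullback $h^*q_0$. Thus the claim reduces to verifying $h^*q_0=q$ and applying $f^*q=q$.

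The main calculation is $h^*q_0=q$. Here I would first observe that all pairwise mod-$2$ intersection numbers among the curves $\{a_i\}_{i\in Q_{0,1}}$, $\{b_j\}_{j\in Q_{1,0}}$, $\{c_k,d_k\}_{k=1}^{t}$ appearing in $h$ vanish: cross-type intersections vanish because the index sets $Q_{0,1},Q_{1,0},Q_{1,1}$ are pairwise disjoint, and $c_k\cdot d_k=2\equiv 0\pmod 2$ within each pair. Hence the transvections defining $h_*$ commute in $\mathrm{Sp}(2g,\mathbb{Z}_2)$, so $h_*$ acts on each symplectic pair independently. Using the standard formula $T_c(x)=x+(c\cdot x)c$, one computes: $(a_l,b_l)$ is fixed for $l\in Q_{0,0}$; $h_*(a_l)=a_l$, $h_*(b_l)=b_l+a_l$ for $l\in Q_{0,1}$; $h_*(a_l)=a_l+b_l$, $h_*(b_l)=b_l$ for $l\in Q_{1,0}$; and $h_*(a_l)=a_l+d_k$, $h_*(b_l)=b_l+c_k$ for $l\in\{i_k,j_k\}\subset Q_{1,1}$. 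Feeding these into $q_0(h_*(\cdot))$ and expanding via Lemma \ref{lem:quadratic-sum}, a direct case-by-case check yields $h^*q_0(a_l)=q(a_l)$ and $h^*q_0(b_l)=q(b_l)$ for every $l$. Since both sides are quadratic forms determined by their values on a basis, $h^*q_0=q$.

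With $h^*q_0=q$ in hand, the hypothesis $f^*q=q$ becomes $f^*(h^*q_0)=h^*q_0$, i.e.\ $f$ preserves the Rohlin form induced by the trivial embedding $e_0\circ h$, and Hirose's criterion then produces $\tilde f\in\mathrm{Aut}(S^4)$ extending $f$ with respect to $e_0\circ h$. The only mildly delicate step is the bookkeeping of $h_*$ on the four cases $Q_{r,s}$ together with the commutativity of the transvections; the hypothesis $\mathrm{Arf}(q)=0$ enters only through the construction of $h$, where it guarantees that $|Q_{1,1}|$ is even so the pairing into $(c_k,d_k)$ is well defined.
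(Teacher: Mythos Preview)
Your proposal is correct and follows essentially the same approach as the paper: both arguments compute that $q_{e_0}$ vanishes on the standard basis, evaluate $h_*$ on $a_l,b_l$ case by case according to the partition $Q_{r,s}$, verify $h^*q_{e_0}=q$ via Lemma~\ref{lem:quadratic-sum}, and conclude with Hirose's criterion (Theorem~\ref{thm:Hirose}). Your explicit remark that the transvections commute mod~$2$ (and that $e_0\circ h$ is trivial because it has the same image as $e_0$) makes a couple of points more transparent than the paper does, but the underlying computation is identical.
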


Moreover, for an automorphism of $F_{g,k}$ which satisfies the condition in Proposition \ref{prop:finitetype}, we can extends it to an automorphism of a closed surface as in the proof, and then construct an embedding in $S^4$ for its extension.

To prove Claim \ref{claim}, we should first introduce the induced Rohlin form $q_e$ for an embedding $e:F_g\hookrightarrow S^4$ as follows.
Suppose $P\subset S^4$ is an embedded surface with $\partial P\subset e(F_g)$ and $P-\partial P$ intersects $e(F_g)$ transversely.
Perturb $P$ to obtain $P'$ such that $P'$ is transverse to both $P,e(F_g)$, and $\partial P'\subset e(F_g)$ is parallel to $\partial P$.
Then the value of $q_e$ on $[e^{-1}(\partial P)]\in H_1(F_g;\mathbb{Z}_2)$ equals $|P\cap P'|\,({\rm mod}\,2)$.
It defines a quadratic form $q_e:H_1(F_g;\mathbb{Z}_2)\to\mathbb{Z}_2$ \cite{Roh}.

\begin{thm}\cite[Theorem 1.2]{Hi1}\label{thm:Hirose}
	Suppose $e:F_g\hookrightarrow S^4$ is a trivial embedding. 
	An automorphism $f$ of $F_g$ is extendable over $S^4$ with respect to $e$ if and only if $f^*q_e=q_e$.
\end{thm}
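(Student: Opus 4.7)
The plan is to invoke Theorem \ref{thm:Hirose}, which reduces Claim \ref{claim} to showing that the induced Rohlin form $q_{e_0\circ h}$ on $H_1(F_g;\mathbb{Z}_2)$ equals $q$ (since $f^*q=q$ by hypothesis). Because $h$ is an automorphism of $F_g$, the image $(e_0\circ h)(F_g)$ coincides as a subsurface of $S^4$ with $e_0(F_g)$, so the Rohlin forms are related by the naturality formula $q_{e_0\circ h}=h^*q_{e_0}$. Thus it suffices to compute $q_{e_0}$ and then its pullback $h^*q_{e_0}$.

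For the standard trivial embedding $e_0:F_g\hookrightarrow S^3\subset S^4$, each $a_i$ bounds a disk in one of the two handlebodies that $F_g$ bounds in $S^3$, and each $b_i$ bounds a disk in the other. Pushing such a disk off generically in $S^4$ yields a disjoint parallel copy, giving $q_{e_0}(a_i)=q_{e_0}(b_i)=0$. So $q_{e_0}$ is the unique quadratic form $q_0$ vanishing on the symplectic basis of Figure \ref{fig:symplectic}.

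To compute $h^*q_0$, I would use the transvection formula
\[(T_c^*q)(x)-q(x)=(x\cdot[c])\bigl(q([c])+1\bigr),\]
which shows that $T_c^*q-q$ equals the linear functional $\xi_c(x):=x\cdot[c]$ when $q([c])=0$ and vanishes when $q([c])=1$. The crux is the following verification from Figure \ref{fig:curves} together with the mod-$2$ intersection formulas: for the particular collection of twist curves appearing in $h$, namely $\{a_i\}_{i\in Q_{0,1}}$, $\{b_j\}_{j\in Q_{1,0}}$, and $\{c_k,d_k\}_{k=1}^t$, every pairwise intersection number vanishes in $\mathbb{Z}_2$. This uses the pairwise disjointness of the sets $Q_{r,s}$ together with the computation $c_k\cdot d_k=(a_{i_k}+a_{j_k})\cdot(b_{i_k}+b_{j_k})=0$. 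Combined with $q_0$ vanishing on each twist curve, this implies that after any partial product of twists has been applied, the current quadratic form still takes value $0$ on each remaining twist curve, so each twist contributes its full $\xi$-shift and the shifts add order-independently:
\[h^*q_0=q_0+\sum_{i\in Q_{0,1}}\xi_{a_i}+\sum_{j\in Q_{1,0}}\xi_{b_j}+\sum_{k=1}^t(\xi_{c_k}+\xi_{d_k}).\]

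It then remains to evaluate this sum on the basis and check it matches $q$. Using $\xi_\gamma(x)=x\cdot[\gamma]$ together with the definition of $Q_{r,s}$ and the pairing of $Q_{1,1}$ into the index pairs $\{(i_k,j_k)\}$, one finds the total shift on $a_\ell$ is $1$ iff $\ell\in Q_{1,0}\cup Q_{1,1}$ iff $q(a_\ell)=1$, and analogously for $b_\ell$. Hence $h^*q_0=q$, so $q_{e_0\circ h}=q$ is $f$-invariant, and Theorem \ref{thm:Hirose} yields the extendability. The main obstacle is the non-commutativity of Dehn twists acting on quadratic forms; what rescues the calculation is the specific choice of curves $c_k,d_k$ in Figure \ref{fig:curves}, which makes all mutual mod-$2$ intersections of the twist curves vanish and thereby decouples the shifts into a simple sum of dual linear functionals.
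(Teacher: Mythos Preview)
Your proposal is correct and follows essentially the same route as the paper's proof of Claim~\ref{claim}: both use the naturality $q_{e_0\circ h}=h^*q_{e_0}$, compute $q_{e_0}\equiv 0$ on the symplectic basis via bounding disks in $S^3$, verify $h^*q_{e_0}=q$ on that basis, and then invoke Theorem~\ref{thm:Hirose}. The only difference is organizational: the paper computes $h_*(a_i)$ and $h_*(b_i)$ explicitly case-by-case (according to which $Q_{r,s}$ contains $i$) and then evaluates $q_{e_0}$ on the result, whereas you package the same calculation through the transvection formula $(T_c^*q)(x)=q(x)+(x\cdot[c])(q([c])+1)$ together with the observation that the twist curves have pairwise vanishing mod-$2$ intersection, which lets the shifts decouple into a sum of dual functionals---a tidier bookkeeping device, but not a genuinely different argument.
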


\begin{proof}[Proof of Claim \ref{claim}]
	In $H_1(F_g;\mathbb{Z}_2)$ we have
	\begin{displaymath}
		\begin{split}
			T_{c_k}T_{d_k}(a_{i_k}) & =a_{i_k}+b_{i_k}+b_{j_k},\\ T_{c_k}T_{d_k}(b_{i_k}) & =a_{i_k}+b_{i_k}+a_{j_k},\\ T_{c_k}T_{d_k}(a_{j_k}) & =a_{j_k}+b_{i_k}+b_{j_k},\\ T_{c_k}T_{d_k}(b_{j_k}) & =a_{i_k}+a_{j_k}+b_{j_k},\\
			T_{a_j}(b_j) & = a_j+b_j,\\
			T_{b_i}(a_i) & = a_i+b_i,
		\end{split}
	\end{displaymath}
	while the other generators are preserved by these Dehn twists respectively.
	Let $q_{e_0}$ be the induced Rohlin form of $e_0$. 
	For $i,j=1,2,\cdots,g$, we see that $a_i,b_j$ and their parallel copies bound disjoint disks in $S^3$ (Fig. \ref{fig:disks}),
	\input{symplectic2.TpX}
	so according to the definition, $q_{e_0}(a_i)=q_{e_0}(b_j)=0$.
	Therefore,
	\begin{displaymath}
		\begin{split}
			h^*q_{e_0}(a_i) &=q_{e_0}(h(a_i))=\left\{
			\begin{array}{ll}
				q_{e_0}(a_i)=0 & \text{ if } i\in Q_{0,0}\cup Q_{0,1}\\
				q_{e_0}(a_i+b_i)=1 & \text{ if } i\in Q_{1,0}\\
				q_{e_0}(a_{i_k}+b_{i_k}+b_{j_k})=1 & \text{ if } i=i_k\in Q_{1,1}\\
				q_{e_0}(a_{j_k}+b_{i_k}+b_{j_k})=1 & \text{ if } i=j_k\in Q_{1,1}
			\end{array}
			\right.\\
			& =q(a_i),
		\end{split}
	\end{displaymath}
	and similarly $h^*q_{e_0}(b_i)=q(b_i)$ for each $i=1,2,\cdots,g$. 
	Thus $h^*q_{e_0}=q$.

	Denote $e=e_0\circ h$.
	According to the definition of induced Rohlin form, $q_{e}(x)$ equals $q_{e_0}(h(x))$ for any $x\in H_1(F_g;\mathbb{Z}_2)$.
	As $q_{e_0}(h(x))=h^*q_{e_0}(x)=q(x)$, we have $q_e=q$, thus $f^*q_e=q_e$. 
	It follows from Theorem \ref{thm:Hirose} that $f$ is extendable over $S^4$ with respect to $e$.
\end{proof}

\input{stable.TpX}

\begin{example} 
	Below we work on a concrete example which provides intuition of  both Theorem \ref{stable} and Theorem \ref{puncture}.

	The map $f_2\in\Aut(F_2)$ in Example \ref{ex:f233} has only unbounding invariant spin structures thus is not extendable over $S^4$. 
	According to Theorem \ref{stable}, it becomes extendable after a connected sum with the identity $I_T$ on the torus $T$.
	See Fig. \ref{fig:stable}. 
	View $F_3$ as a connected sum of $T$ and $F_2$ along the blue curve in the middle.
	Take a baisi $\{a_1,b_1,a_2,b_2,a_3,b_3\}$ of $H_1(F_3;\mathbb{Z}_2)$ as in Fig. \ref{fig:symplectic}, then in $H_1(F_3;\mathbb{Z}_2)$ we have
	$$a=b_2,\,b=a_2+a_3+b_3,\,c=a_2+b_3,\,d=a_2+b_2.$$
	Note that $\{a,b,c,d\}$ is a basis of $H_1(F_2;\mathbb{Z}_2)$ and is an $f_2$-orbit, so a quadratic form on $F_3$ is invariant under $[I_T]\#[f_2]$ if and only if it is constant on $a,b,c,d$. 
	Now fix an invariant quadratic form $q$ with $q(a_1)=q(b_1)=q(a)=q(b)=q(c)=q(d)=1$.
	Then
	\begin{displaymath}
		\begin{split}
			q(a_2)=q(a+d)=1,\, & q(b_2)=q(a)=1,\\
			q(a_3)=q(b+c)=1,\,& q(b_3)=q(a+c+d)=0.
		\end{split}
	\end{displaymath}
	Take the trivial embedding $e_0:F_3\hookrightarrow S^3\subset S^4$ as Fig. \ref{fig:symplectic} presents and let $h=T_{b_3}T_{c_1}T_{d_1}$, where $b_3,c_1,d_1$ are the black curves in the middle of Fig. \ref{fig:stable}.	
	Then by Claim \ref{claim}, $[I_T]\#[f_2]$ is extendable over $S^4$ with respect to the embedding $e_0\circ h$ (the bottom of Fig. \ref{fig:stable}).
	
	Related to Theorem \ref{puncture}, the blue curve, which is a figure-eight knot in $S^3$, bounds an embedded $F_{2,1}$ on the right side.
	The restriction of $f_2$ on $F_{2,1}$ (obtained from $F_2$ by removing an open neighborhood of a fixed point of $f_2$) is also extendable over $S^4$ with respect to this embedding.
\end{example}

\bibliographystyle{amsalpha}

\end{document}